\numberwithin{equation}{section}
\newcommand{\labbel}{\label}
\newtheorem{theorem}{Theorem}[section]
\newtheorem{lemma}[theorem]{Lemma}
\newtheorem{proposition}[theorem]{Proposition} 
\newtheorem{corollary}[theorem]{Corollary}
\newtheorem*{claim*}{Claim}
\newtheorem*{theorem*}{Theorem}
\newtheorem*{proposition*}{Proposition}
\newtheorem*{corollary*}{Corollary}
\newtheorem*{lemma*}{Lemma}
\newtheorem*{scholion*}{Scholion}
\theoremstyle{definition}
\theoremstyle{remark}
\newtheorem{remark}[theorem]{Remark}
\newtheorem{remarks}[theorem]{Remarks}
\newtheorem*{remark*}{Remark}
\newtheorem*{remarks*}{Remarks}
\newtheorem{example}[theorem]{Example}
\newtheorem{convention}[theorem]{Convention}
\newtheorem*{observation*}{Observation}
\DeclareMathOperator {\con}{\mathbf {Con}}
\begin{document}
 
\title
{Relation identities in  3-distributive varieties}
 
\author{Paolo Lipparini} 
\urladdr{http://www.mat.uniroma2.it/~ lipparin}
\address{Dipartimento Identitario di Matematica\\Viale della  Ricerca
 Scientifica\\Universit\`a di Roma ``Tor Vergata'' 
\\I-00133 ROME ITALY}

\keywords{Congruence distributive variety; $3$-distributive variety;
 $n$-permutable variety;  
Relation identity; Tolerance; Implication algebra}

\subjclass[2010]{08B10}
\thanks{Work performed under the auspices of G.N.S.A.G.A. Work 
 supported by PRIN 2012 ``Logica, Modelli e Insiemi''.
The author acknowledges the MIUR Department Project awarded to the
Department of Mathematics, University of Rome Tor Vergata, CUP
E83C18000100006.}

\begin{abstract}
Let $\alpha$, $\beta$, $\gamma, \dots$ 
$\Theta$, $\Psi, \dots$  $R$, $S$, $T, \dots$
be variables for, respectively, congruences, 
 tolerances and reflexive admissible relations.
Let  juxtaposition denote intersection.
We show that if
the identity
\begin{equation*}
\alpha( \beta \circ \Theta ) \subseteq \alpha \beta \circ 
\alpha \Theta  \circ \alpha \beta  
 \end{equation*}
holds  in
 a variety $\mathcal {V}$,
then $\mathcal {V}$ has a majority term, equivalently,
$\mathcal {V}$ satisfies 
$ \alpha ( \beta \circ \gamma ) \subseteq \alpha \beta \circ \alpha \gamma $.  
The result is unexpected, since in the displayed identity 
we have one more factor on the right  and,
moreover, if we let $\Theta$ be a congruence,
we get a condition equivalent to $3$-distributivity,
which is well-known to be strictly weaker than the existence
of a majority term.

The above result is optimal in many senses;
for example, we show that slight variations on the displayed identity,
such as 
$ R  (S \circ \gamma  )  \subseteq 
   R S \circ  R   \gamma  \circ R   S$ or 
$R( S \circ T  )  \subseteq R S \circ RT \circ  RT \circ RS$
hold in every  $3$-distributive variety,
hence do not imply  the existence
of a majority term.
Similar identities are valid 
even in varieties with $2$  Gumm terms,
with no distributivity assumption. 
We also discuss relation identities  in
$n$-permutable varieties
and present a few remarks about
implication algebras.
\end{abstract} 

\maketitle

\section{Introduction} \labbel{intro} 
\subsection*{Congruence identities and relation identities} \labbel{rci} 
By a classical and surprising result by Nation \cite{N} in the 1970's,
 there are  non-equivalent lattice identities
 which are equivalent when considered as  identities 
satisfied in all congruence lattices of algebras in some variety.
Many results of this kind followed, for example, 
Freese and J{\'o}nsson  \cite{FJ}  proved that  modularity 
is equivalent to the  Arguesian identity for congruence lattices
in varieties. See also  Day and Freese \cite{DF}.
The survey \cite{cd} by J{\'o}nsson  is  an excellent introduction to 
earlier results on the subject.
The field is still very active today;
more recent results 
and further references 
can be found, among others, 
in Cz\'edli,  Horv\'ath and
 Lipparini \cite{CHL},
 Freese and  McKenzie \cite{FMK},  Gumm \cite{G}, 
Hobby and McKenzie \cite{HMK},  
Kearnes and  Kiss \cite{kk} and Tschantz \cite{T}. 

The  research in the present note  is motivated by two facts
related to the study of congruence identities.
First,
it almost invariably happens that
 tolerances, and sometimes even reflexive and admissible relations,
are irreplaceable tools in proving  results about
congruences. 
This is 
already evident in the proofs of the 
famous and well-known   characterizations by
J{\'o}nsson \cite{J}  and Day \cite{D}
of, respectively, congruence distributive and congruence modular varieties.
The relevance of tolerances and admissible relations
in the study of congruence identities
appears clearly 
in  \cite{CHL,G,T}, just to mention some examples.

The second aspect that plays a role in our motivations is that,
almost at the same time of  Nation's discovery mentioned at the beginning,
non trivial ``relation identities'' have been discovered.
By a \emph{relation identity} we mean an identity satisfied by
reflexive and admissible relations on some algebra, where the operations
considered are usually intersection and relational composition, possibly
also converse and transitive closure.
Non trivial is intended in a sense similar to that  of Nation's results;
namely, we call an implication between two relation identities 
\emph{non trivial} if it holds when considered
 for all algebras in a variety, but it does not necessarily hold for
single algebras.
 Werner \cite{W} showed that a variety 
is congruence permutable if and only if 
the relation identity $R \circ R = R$ holds in $\mathcal {V}$.  
The result is also due independently to Hutchinson \cite{H}.
It is easy to see that the result is non trivial in the sense specified above;
see Remark \ref{ror}(b) below. 
A similar relational characterization of congruence $n$-permutable varieties is
mentioned in  Hagemann an  Mitschke
\cite{HM}. See Proposition  \ref{nperm} below for some generalizations. 
Further examples, details, comments, applications and references
about the use of tolerances or reflexive  admissible relations, and about
relation identities can be found in
\cite{CHL,gm,G,kk,contol,ricmc,uar,baker,ntcm,T}. 
Because of the above comments
 we believe that the study of relation identities is interesting, as a 
non trivial generalization of the study of congruence 
identities.

\subsection*{Relation identities in congruence distributive varieties} \labbel{spc} 
In \cite{ntcm}  we used relation identities in order to approach the problem
of the relationships between the numbers of Gumm and of Day
terms in a congruence modular variety.
While the problem is still largely unsolved,
the partial results confirm the usefulness of the approach.
Though our main aim has
been the study of relation identities
satisfied in  modular varieties,
we encountered delicate issues already
in the relatively well-behaved case
of $4$-distributive varieties \cite{baker}.

Here we show that the problem of the 
satisfaction of relation identities
  is not trivial even for
identities related to
 $3$-distributivity.
A detailed study of $3$-distributive 
varieties in a different direction 
 appears in 
Kiss and Valeriote \cite{KV}.
A  variety $\mathcal {V}$ is \emph{$n$-distributive}
if $\mathcal {V}$ 
satisfies the congruence identity
$\alpha( \beta \circ \gamma ) \subseteq 
\alpha \beta \circ \alpha \beta \circ \alpha \gamma \dots$ ($n$ factors),
 where ``$n$ factors''  means 
$n-1$ occurrences of  $\circ$ on the right-hand side.
In general, we say that 
some identity 
\emph{holds in a variety} $\mathcal {V}$
if the identity holds in the set of reflexive and admissible relations on
every algebra in $\mathcal {V}$.
We shall adopt the conventions introduced in the first 
paragraph of the abstract, in particular, 
juxtaposition denotes intersection and $R$, $S$\dots \ 
are interpreted as reflexive and admissible relations. 
All the binary relations considered in this note are
assumed to be reflexive, hence we shall sometimes simply say 
\emph{admissible} in place of reflexive and admissible.
 
The definition  of 
$n$-distributivity as given above  is equivalent
to the classical notion of being
$\Delta_n$, as  introduced in 
J{\'o}nsson \cite{J}. Among other, J{\'o}nsson \cite{J}
 showed that a variety is
congruence distributive if and only if it is $\Delta_n$,   for some $n$. 
As remarked in \cite{jds}, a recent result by
Kazda, Kozik,   McKenzie and Moore \cite{adjt}  
can be used to strengthen  
J{\'o}nsson's Theorem to the effect that
a variety $\mathcal V$ is congruence distributive 
if and only if there is some $k$ such that  $\mathcal V$ satisfies the relation identity
\begin{equation}\labbel{jr}
 \alpha (S \circ T) \subseteq \alpha S \circ  \alpha T \circ \alpha S \dots
\text{ \ \ ($k$ factors).} 
   \end{equation}    
Hence it is interesting to study 
relation identities satisfied by congruence distributive varieties;
in particular, to evaluate the best 
possible value of $k$ for which \eqref{jr} holds
in any given variety.  

\subsection*{Summary of the main results} \labbel{summ} 
It is easy to see that 
if $\mathcal {V}$ has a majority term
(and is not a trivial variety) then 
the best possible value of $k$ for $\mathcal {V}$ in 
\eqref{jr} above is $2$.
In \cite{baker} we provided examples of 
$4$-distributive varieties 
for which the best possible value of $k$ is $4$.   
The main results of the present note 
assert that there is no variety 
 for which the best possible value of $k$
is  $3$, even if we allow $T$ 
to be a tolerance; moreover, the best value of $k$
is $4$ for every $3$-distributive not  $2$-distributive variety.   
Combining the present results with 
\cite{baker}, we get that the best value of $k$ in  
\eqref{jr} does not determine 
the distributivity level of a variety.

We do not know whether there is a variety 
for which  $k=5$
is the best value.
More generally, we know little about the best possible values of 
$k$ for arbitrary congruence distributive 
varieties, though some bounds are provided
in \cite{jds,baker} for, say,  varieties with $h$
directed J{\'o}nsson terms and for varieties with an
$n$-ary near-unanimity term.
Bounds are provided in Section \ref{np} below  for
congruence distributive $n$-permutable varieties. 

The identities we consider in this note are 
extremely sensible to minimal variations.
For example, if $k=3$ and we let
 $T$ be a congruence in \eqref{jr},
we get a condition equivalent to  $3$-distributivity.
See identity \eqref{3c} 
in Theorem \ref{3dist}. It is well-known that there are 
$3$-distributive not $2$-distributive varieties; in particular, any such variety
fails to satisfy the identity 
\begin{equation}\labbel{da}    
\alpha( \beta \circ \Theta ) \subseteq \alpha \beta \circ 
\alpha \Theta  \circ \alpha \beta  
   \end{equation} 
displayed in the abstract, but does satisfy
\begin{equation}\labbel{daa}     
\alpha ( \Psi \circ  \gamma ) \subseteq 
\alpha \Psi \circ \alpha \gamma  \circ \alpha \Psi.
  \end{equation}
The symmetry between \eqref{da} and  \eqref{daa} is only apparent.
In \eqref{daa}  the relation assumed to be a congruence is placed in the middle
of the right-hand side, while in \eqref{da} 
 it appears two times on the edges.
  
Further relation identities satisfied by $3$-distributive varieties 
are presented in Section \ref{3d}. There we show that, in many cases,
we do not even need the equation $j_2(z,y,z) =z$
from the set of J{\'o}nsson's equations characterizing $3$-distributivity.
Namely, the results apply to congruence modular varieties 
with $2$ Gumm terms, where in the numbering we are not counting the
trivial projection included in the original presentation by Gumm of his set of terms.  

Congruence distributive and congruence modular varieties
which are further assumed to be $n$-permutable are discussed in
Section \ref{np}.   
Finally, in Section \ref{ianu}
we show directly by an example that the displayed identity
in the abstract fails in the variety of implication algebras.  
We also show that most properties of implication algebras 
are preserved if we add a $4$-ary near-unanimity term.

\section{A tolerance identity implying $2$-distributivity} \labbel{tolimp}

\begin{theorem} \labbel{tol2}
If  $\mathcal {V}$ is a variety and the  identity
\begin{equation}\labbel{id} 
\alpha( \beta \circ \Theta ) \subseteq \alpha \beta \circ  \alpha \Theta  \circ \alpha \beta 
 \end{equation}
holds in every algebra 
$\mathbf A $ in $  \mathcal {V}$,
 for all congruences $\alpha$, $\beta$ and tolerance $\Theta$
on $\mathbf A$,
then $\mathcal {V}$ has a majority term.
In fact, it is enough to assume that 
\eqref{id} holds in $\mathbf F _{ \mathcal V } ( 3 ) $,
the free algebra in $\mathcal {V}$ generated by $3$ elements.  
 \end{theorem}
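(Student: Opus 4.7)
The plan is to work in the free algebra $\mathbf{F}:=\mathbf{F}_{\mathcal{V}}(3)$ on three generators $x,y,z$ and to specialize \eqref{id} to carefully chosen principal congruences and a principal tolerance. Take $\alpha:=\operatorname{Cg}_{\mathbf{F}}(x,y)$, $\beta:=\operatorname{Cg}_{\mathbf{F}}(x,z)$, and let $\Theta:=\operatorname{Tg}_{\mathbf{F}}(y,z)$ be the smallest tolerance on $\mathbf{F}$ containing the pair $(y,z)$. Then $(x,y)\in\alpha$ trivially, while $(x,y)\in\beta\circ\Theta$ via the witness $z$, so \eqref{id} delivers two intermediate elements that I shall write as $p(x,y,z)$ and $q(x,y,z)$, fitting into a chain
\[
x\mathrel{\alpha\beta}p(x,y,z)\mathrel{\alpha\Theta}q(x,y,z)\mathrel{\alpha\beta}y.
\]

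The next step is to translate this chain into term identities satisfied by $\mathcal{V}$. Recall that in $\mathbf{F}_{\mathcal{V}}(3)$ the principal congruence $\operatorname{Cg}(a,b)$ generated by two of the free generators is the kernel of the canonical surjection onto $\mathbf{F}_{\mathcal{V}}(2)$ that identifies $a$ with $b$. Hence the conditions $(x,p)\in\alpha\cap\beta$ and $(q,y)\in\alpha\cap\beta$ read as
\[
p(x,x,z)\approx p(x,y,x)\approx x,\qquad q(x,x,z)\approx x,\qquad q(x,y,x)\approx y.
\]
Moreover, the tolerance condition $(p,q)\in\operatorname{Tg}_{\mathbf{F}}(y,z)$ is unpacked, via the standard description of a principal tolerance in a free algebra, into the existence of a term $T$ with auxiliary variable tuples $\bar v$ and $\bar w$ such that $p(x,y,z)\approx T(x,y,z,\bar y,\bar z)$ and $q(x,y,z)\approx T(x,y,z,\bar z,\bar y)$, where $\bar y$ and $\bar z$ denote tuples of copies of $y$ and $z$ respectively.

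Finally, I aim to assemble a ternary term $m$ satisfying the three majority identities $m(x,x,z)\approx m(x,y,x)\approx x$ and $m(x,y,y)\approx y$. The first two are essentially inherited from $p$; the crux is producing the third, which must be extracted from the ``swap'' of $y$ and $z$ between $p$ and $q$ that is encoded by $T$. The plan is to substitute strategically into $T$, combine the identities obtained in the previous paragraph, and, if necessary, run a symmetric second instance of \eqref{id} with the roles of $\beta$ and $\Theta$ permuted (for example $\beta':=\operatorname{Cg}(y,z)$ and $\Theta':=\operatorname{Tg}(x,z)$) in order to obtain complementary equations. This assembly is the main obstacle, because if $\Theta$ were a congruence rather than a tolerance, the very same argument would yield only $3$-distributivity, which is strictly weaker than the existence of a majority term: the non-transitivity of $\Theta$ must be used essentially. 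Once a majority term is produced, the conclusion that $\mathcal{V}$ satisfies $\alpha(\beta\circ\gamma)\subseteq\alpha\beta\circ\alpha\gamma$ is the classical J{\'o}nsson characterization of congruence $2$-distributivity.
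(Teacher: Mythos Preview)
Your setup is essentially the paper's Lemma~\ref{lemid} up to a relabeling of the generators: the paper takes $\alpha=\operatorname{Cg}(x,z)$, $\beta=\operatorname{Cg}(x,y)$, $\Theta=\operatorname{Tg}(y,z)$ and obtains elements $j_1,j_2$ with $x\mathrel{\alpha\beta}j_1\mathrel{\alpha\Theta}j_2\mathrel{\alpha\beta}z$, yielding the same four equations you list for $p,q$ (after the relabeling). Your unpacking of the tolerance relation is also correct, and in fact can be sharpened: since $\Theta$ is generated by a \emph{single} pair, one may take a $5$-ary term $w$ with $p(x,y,z)=w(x,y,z;y,z)$ and $q(x,y,z)=w(x,y,z;z,y)$. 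Substituting into your four equations for $p,q$ gives four equations (A)--(D) for $w$ alone.

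The genuine gap is the final paragraph. You correctly identify that producing the third majority identity is ``the main obstacle'', but you do not actually produce it; ``substitute strategically'' is precisely the step that carries all the content of the theorem. The paper resolves this with a single explicit definition,
\[
m(x,y,z)=w\bigl(w(x,y,z;y,z),\,w(x,y,z;z,y),\,y;\,z,\,w(x,z,y;y,z)\bigr),
\]
and then verifies the three majority equations by direct calculation using (A)--(D). Note that this uses a \emph{single} application of \eqref{id}; your proposed fallback of running a second symmetric instance with $\beta'=\operatorname{Cg}(y,z)$ and $\Theta'=\operatorname{Tg}(x,z)$ is unnecessary and, more importantly, it is not clear that it would help: a second instance yields a \emph{different} $5$-ary term $w'$ with its own equations, and there is no evident way to combine $w$ and $w'$ into a majority term. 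The whole point is that the single $w$, through the swap of its last two arguments, already encodes enough to build $m$---but the formula above is not obvious, and without it (or an equivalent) the argument is incomplete.
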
 

Theorem \ref{tol2}
shall be proved after  a lemma,
whose proof  is standard,
but not completely usual, since it involves
a tolerance, not only congruences.
Before stating the lemma we recall
the \emph{J{\'o}nsson terms for $3$-distributivity}.
These terms will appear only marginally in the proof of 
the lemma  and will assume a prominent role in Section \ref{3d}.
According to \cite{J}, a variety   $\mathcal {V}$ is
$\Delta_3$
if and only if $\mathcal {V}$ has terms $j_1$ and 
$j_2$ satisfying the following set
of equations:
\begin{equation} \tag{J}
\begin{aligned}\labbel{ggg}
\text{\rm (J$_L$)} \  x & = j_1(x,x,z), 
& \text{\rm (J$_C$)} \   j_1(x,z,z)  & = j_2(x,z,z),   
& \text{\rm (J$_R$)} \    j_2(x,x,z) &  = z,
\\
\text{\rm (J$_1$)}\  x &= j_1(x,y,x),  
&&
&\text{\rm (J$_2$)} \  j_2(z,y,z) & = z.
\end{aligned} 
\end{equation}   

It is implicit in \cite{J},
and only a minor part of it,
 that a variety $\mathcal {V}$ 
is $\Delta_3$ 
if and only if $\mathcal {V}$ obeys the definition 
of $3$-distributivity that we have given in the introduction.  

In the equations \eqref{ggg}
 we could have done with just two 
variables, 
 but we shall not need this observation,
which nevertheless is important and relevant in different contexts.
We believe that here maintaining
three variables looks intuitively clearer;
the same applies to the
 equations in \eqref{eqw} below.

  \begin{remark} \labbel{asym}   
Notice the asymmetry between 
$j_1$ and  $j_2$ in the equations  \eqref{ggg}. 
We get different identities if we reverse 
both the order of the terms and of the variables.
A probably better way to appreciate the asymmetry goes as follows.
Recall that a \emph{majority term} 
is a term  $ j_1$ satisfying the equations
 (J$_L$), 
(J$_1$), as well as $  j_1(x,z,z) = z$.
In \eqref{ggg} $ j_1$  is required to 
satisfy two thirds of the majority condition.
On the other hand, 
a \emph{Maltsev term  for congruence permutability}
is a term $j_2$ satisfying  
 (J$_R$), as well as $x= j_2(x,z,z)$.  
In \eqref{ggg} $ j_2$  is required to 
satisfy one third of the majority condition
and one half of the permutability condition.
The terms
$ j_1$ and $ j_2$ are tied by 
equation (J$_C$), 
but (J$_C$) is not symmetric, either.
Had we required
$ j_1(x,x,z)  = j_2(x,x,z)$ in place of (J$_C$),
we would have obtained $x=z$, by 
(J$_L$) and (J$_R$).  
In spite of the apparent similarity,
$j_1$ and  $j_2$ behave in a quite different way!
Let us remark that the above observation 
is only part of  much more
general considerations, see Remark 
4.2 in \cite{ntcm}.
 \end{remark}

Now 
we state and prove
the lemma we need.
In some of the equations below we shall use a semicolon in place of 
a comma in order to improve readability.

\begin{lemma} \labbel{lemid}
If $\mathbf F _{ \mathcal V } ( 3 ) $   satisfies the identity 
\eqref{id}, then $\mathcal {V}$ has a
$5$-ary term  
$w$ such that the following equations hold in $\mathcal {V}$:
\begin{equation}     
\begin{aligned}\labbel{eqw}
\text{\rm (A)} \ \ x &= w(x,x,z;x,z)    \qquad    
& \text{\rm (B)} \ \      w(x,x, z;z,x) & = z
\\
\text{\rm (C)} \ \
x &= w(x,y,x; y,x)         
 & \text{\rm (D)} \ \   w(x,y,x;x,y )&= x .
\end{aligned} 
 \end{equation}
 \end{lemma}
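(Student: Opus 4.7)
The plan is to apply the assumed identity to a canonical instance in $\mathbf{B} := \mathbf F _{ \mathcal V } ( 3 ) $ and then read off the desired $5$-ary term from the resulting decomposition. Let $x,y,z$ be the free generators of $\mathbf{B}$, let $\alpha$ be the congruence of $\mathbf{B}$ generated by $(x,z)$, let $\beta$ be the congruence generated by $(x,y)$, and let $\Theta$ be the \emph{tolerance} generated by $(y,z)$. Then $(x,y) \in \beta$ together with $(y,z) \in \Theta$ give $(x,z) \in \beta \circ \Theta$, while $(x,z) \in \alpha$ is immediate, so $(x,z) \in \alpha(\beta \circ \Theta)$. The hypothesis then delivers elements $c, d \in \mathbf{B}$ with
\[
x \mathrel{\alpha\beta} c \mathrel{\alpha\Theta} d \mathrel{\alpha\beta} z.
\]

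The next step is to decode $(c,d) \in \Theta$. Since $\Theta$ is the subuniverse of $\mathbf{B}^2$ generated by the diagonal (itself generated by $(x,x),(y,y),(z,z)$ because $\mathbf{B}$ is free on $x,y,z$) together with $(y,z)$ and $(z,y)$, there is a term $r$ of some arity with
\[
c = r(x,y,z;\,y,\dots,y;\,z,\dots,z), \qquad d = r(x,y,z;\,z,\dots,z;\,y,\dots,y),
\]
where the two ``swap blocks'' record the positions at which the generating pair $(y,z)$, respectively its symmetric $(z,y)$, has been used. Collapsing each block to a single new variable yields a $5$-ary term
\[
w(u_1,u_2,u_3,u_4,u_5) := r(u_1,u_2,u_3;\,u_4,\dots,u_4;\,u_5,\dots,u_5)
\]
for which $c = w(x,y,z,y,z)$ and $d = w(x,y,z,z,y)$.

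To finish, each of the four conditions $(x,c) \in \beta$, $(x,c) \in \alpha$, $(d,z) \in \beta$, $(d,z) \in \alpha$ converts into an identity of $\mathcal V$ via a quotient map. The isomorphism $\mathbf{B}/\beta \cong \mathbf F _{ \mathcal V } ( 2 ) $ (identifying $x$ with $y$) turns $(x,c) \in \beta$ into $w(x,x,z,x,z)=x$, that is (A), and $(d,z) \in \beta$ into $w(x,x,z,z,x)=z$, that is (B); similarly $\mathbf{B}/\alpha \cong \mathbf F _{ \mathcal V } ( 2 ) $ (identifying $x$ with $z$) produces (C) from $(x,c)\in \alpha$ and (D) from $(d,z)\in \alpha$. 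The remaining condition $(c,d) \in \alpha$ degenerates to $x=x$ and carries no information.

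The only point demanding real care is the tolerance-decoding step: because $\Theta$ is symmetric one must admit both swap directions, which is exactly what forces two fresh variables (hence arity $5$, not $4$) in the new term. Everything else is routine bookkeeping inside the free algebra.
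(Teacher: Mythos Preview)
Your proof is correct and follows essentially the same route as the paper: set up $\alpha,\beta,\Theta$ on the generators of $\mathbf F_{\mathcal V}(3)$, apply the hypothesis to $(x,z)$, and decode the $\Theta$-step to extract the $5$-ary term $w$. The only cosmetic difference is that the paper writes down the explicit description $\Theta = \{(w(x,y,z;y,z),w(x,y,z;z,y)) : w \text{ $5$-ary}\}$ directly (so no intermediate term $r$ with block collapsing is needed), and then reads off (A)--(D) by substituting into the J{\'o}nsson-style equations $(\text{J}_L),(\text{J}_R),(\text{J}_1),(\text{J}_2)$ rather than passing through the quotient maps; the content is identical.
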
 

\begin{proof}
Let $x$, $y$ and  $z$
be the generators of  $\mathbf F _{ \mathcal V } ( 3 ) $
and let $\alpha$ and $\beta$ be the congruences generated by, respectively,
the pairs $(x,z)$ and  $(x,y)$. Let $\Theta$
be the smallest tolerance containing $(y,z)$, thus
$(x,z) \in \alpha ( \beta \circ \Theta )$, as witnessed by the element 
$y$.   
By assumption, 
$(x,z) \in  \alpha \beta \circ \alpha \Theta  \circ \alpha \beta$, hence 
$ x \mathrel {\alpha  \beta } j_1 \mathrel { \alpha \Theta  } j_2
\mathrel { \alpha \beta  }  z$, for certain elements 
$j_1, j_2 \in F _{ \mathcal V } ( 3 ) $.
Notice that the condition $ j_1 \mathrel { \alpha } j_2$ 
is redundant, since it follows from the assumption that $\alpha$ 
is a congruence and  from
$ j_1 \mathrel { \alpha  } x  \mathrel \alpha  z \mathrel \alpha  j_2$. 
In fact, we shall not use $ j_1 \mathrel { \alpha } j_2$
explicitly.

Since we are working in the free algebra generated by 
 $x$, $y$ and  $z$, we can think of 
$j_1 $ and $ j_2 $ as ternary terms.
Classically, the $ \alpha $-relations 
imply that 
$j_1$ and  $j_2$ satisfy 
 the equations
(J$_1$) and (J$_2$)
from  \eqref{ggg}, 
while the 
$\beta$-relations entail
(J$_L$) and (J$_R$).
Were $\Theta$ assumed to be a congruence, we 
had also (J$_C$), 
thus getting the  J{\'o}nsson's 
condition for $3$-distributivity.
Since $\Theta$ is only assumed to be a tolerance, we have to proceed in a
different fashion. 
It is easy to check that
\begin{equation}\labbel{uu}      
\Theta = \{ (w(x,y,z; y,z), w(x,y,z; z,y))  \mid w \text{ a $5$-ary term of  } 
\mathcal {V} \}.
  \end{equation}
 Indeed, the  
relation defined by the condition on the right 
in \eqref{uu} 
is obviously reflexive, admissible,
symmetrical  and contains  $(y,z)$.
On the other hand, any  reflexive, admissible and
symmetrical  relation containing  $(y,z)$
has to contain all the pairs appearing on the right in \eqref{uu}.
Hence equality follows.

Since $j_1(x,y,z) \mathrel \Theta  j_2(x,y,z)$ by
construction, then
by \eqref{uu} there exists some $5$-ary term   
$w$ such that  
\begin{equation}\labbel{ww}       
j_1(x,y,z) = w(x,y,z; y,z) \qquad  \text{ and } \qquad     w(x,y,z; z,y) = j_2(x,y,z).
\end{equation}
Since the equations in \eqref{ww}  involve only three variables
and hold in $\mathbf F _{ \mathcal V } ( 3 ) $, these equations  hold 
throughout $\mathcal {V}$.
Notice that in \eqref{ww}
we actually need three variables;
two variables are not enough. 
Substituting 
the equations \eqref{ww} in 
(J$_L$), 
(J$_R$), (J$_1$) and (J$_2$),
we get respectively   
the equations (A), (B), (C) and (D) in \eqref{eqw}. 
 \end{proof}    

\begin{remark} \labbel{23}     
Of course, the proof of 
Lemma \ref{lemid}  provides an ``if and only if'' 
condition, since we can retrieve $j_1$ and  $j_2$   
from $w$, using the equations \eqref{ww};
 then the classical homomorphism argument shows 
that these three terms witness that 
the identity \eqref{id} holds throughout $\mathcal {V}$.   
However, we do not need this argument, since
the conclusion of Theorem \ref{tol2} is much stronger;
 indeed,  the existence of a majority term implies the tolerance identity
$\alpha( \beta \circ \Theta ) \subseteq \alpha \beta \circ  \alpha \Theta $, actually,
the relation identity
$R (S \circ T) \subseteq RS \circ RT$.
In other words,
 we get the quite surprising result
that, globally, that is, within a variety,
 the locally weaker tolerance identity
$\alpha( \beta \circ \Theta ) \subseteq 
\alpha \beta \circ  \alpha \Theta  \circ \alpha \beta $
implies  (and hence is equivalent to)
the  much stronger
relation identity
$R (S \circ T) \subseteq RS \circ RT$.
Notice that the latter identity is stronger in two senses:
first, we have $2$ factors instead of $3$ factors on the right;
moreover, it is more general, since it deals with reflexive and admissible relations,
rather than with tolerances or congruences.   

But we are getting  too far  ahead!
A proof of  Theorem \ref{tol2} is necessary in order
 to fully justify the above comment.
\end{remark}

\begin{proof}[Proof of Theorem \ref{tol2}]
Suppose that $\mathcal {V}$ satisfies 
the identity \eqref{id}. By Lemma \ref{lemid},
we have a $5$-ary term $w$
satisfying the equations \eqref{eqw}.
Define
\begin{equation}\labbel{mw}
m(x,y,z) =
w(
w(x,y,z;y,z),
w(x,y,z;z,y),
y; z,
w(x,z,y;y,z)).
   \end{equation}        
 Using \eqref{mw} and \eqref{eqw} repeatedly, we get
\begin{align*}
m(x,x,z) &=
w(
w(x,x,z;x,z),
w(x,x,z;z,x),
x; z,
w(x,z,x;x,z)) 
\\
& = ^{\text{\rm (A)(B)(D)}}
w(x,z,x;z,x) = ^{\text{\rm (C)}} x ,
\\
m(x,y,x) &=
w(
w(x,y,x;y,x),
w(x,y,x;x,y),
y; x,
w(x,x,y;y,x))
\\
& = ^{\text{\rm (C)(D)(B)}}
w(x,x,y;x,y) = ^{\text{\rm (A)}}x ,
\\
m(x,z,z) &=
w(
w(x,z,z;z,z),
w(x,z,z;z,z),
z; z,
w(x,z,z;z,z)) = ^{\text{\rm (B)}}z,
\end{align*}     
where the superscripts indicate
the specified equations of \eqref{eqw}
that we have used, in the respective order.
Hence $m$ 
is a majority term and the theorem 
is proved.
\end{proof}

\section{Relation identities valid in  3-distributive varieties} \labbel{3d}

Since there are $3$-distributive varieties 
without a majority term
(see, e.~g.,  Section \ref{ianu} below),
we have that  $3$-distributive varieties 
do not necessarily satisfy 
identity \eqref{id}.
On the other hand in the present section
we show that 
 $3$-distributive varieties satisfy 
identities which are slightly weaker than 
 \eqref{id}, but very similar to it.
For example, 
 $3$-distributive varieties do satisfy 
 $R  (S \circ \gamma  )  \subseteq 
   R S \circ  R   \gamma  \circ R   S$
and
$ R(S \circ T )  \subseteq 
RS \circ R T \circ R T \circ R S$. 
Notice that if we take $R$, $S$ and $T$
to be congruences in the above identities, we get back
$3$-distributivity, hence the results are  optimal. 

As  mentioned in the
introduction, the results form  Kazda,  Kozik,  McKenzie and Moore \cite{adjt}
and the observations in \cite{jds} show that, for every 
$n$, there is some (possibly quite large) $k$ such that 
$\alpha(S \circ T) \subseteq \alpha S \circ \alpha T \circ \alpha S \circ \dots 
\text{ ($k$ factors)} $    holds in every $n$-distributive variety.
We show that in the case $n=3$ the best possible value of 
$k$ is $4$, hence  not particularly large; moreover, 
we can equivalently consider a reflexive and admissible relation 
$R$ in place of the congruence $\alpha$.

\begin{theorem} \labbel{3dist} 
If $\mathcal {V}$ is a $3$-distributive variety, then
$\mathcal {V}$ satisfies 
\begin{align}
\labbel{3b}  
  R  (S \circ T ) & \subseteq 
 R  S \circ  R  T \circ  R  T \circ  R  S ,
\\
\labbel{3a}     
  R  (S \circ T ) & \subseteq 
 R  S \circ  R  T \circ  R  S \circ  R  T ,  \text{ and }  
\\
\labbel{3c}  
 R  (S \circ \gamma  ) & \subseteq 
   R S \circ  R   \gamma  \circ R   S. 
 \end{align}
\end{theorem}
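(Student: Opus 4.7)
\medskip

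\noindent\textbf{Proof proposal for Theorem \ref{3dist}.}

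The plan is to carry out the standard J\'onsson-chain argument, but now tracking every reflexive admissible relation separately rather than working modulo a single congruence. Fix J\'onsson terms $j_1,j_2$ satisfying the equations \eqref{ggg}, let $(a,c)\in R\cap(S\circ T)$ be witnessed by $b$, so $(a,b)\in S$ and $(b,c)\in T$, and set
\[
d=j_1(a,c,c)=j_2(a,c,c),
\]
the equality holding by (J$_C$). For \eqref{3b} I would work with the four-link chain
\[
a,\quad j_1(a,b,c),\quad d,\quad j_2(a,b,c),\quad c,
\]
and check that the four consecutive pairs lie in $RS$, $RT$, $RT$, $RS$ respectively.

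The two outer pairs are routine. For $(a,j_1(a,b,c))$: the identity $a=j_1(a,a,c)$ from (J$_L$) together with $R$-admissibility applied to $(a,a),(a,b),(c,c)\in S$ gives the $S$-component, while $a=j_1(a,b,a)$ from (J$_1$) together with admissibility applied to $(a,a),(b,b),(a,c)\in R$ gives the $R$-component. The pair $(j_2(a,b,c),c)$ is handled symmetrically, exploiting (J$_R$) on the $S$-side and (J$_2$) on the $R$-side. In each outer step the fact that $R$ is only reflexive and admissible (not symmetric) is fine because one is moving "$a$ towards $c$", which is exactly the direction in which the hypothesis $(a,c)\in R$ is usable.

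The inner two pairs are the delicate ones. For the $T$-parts, admissibility of $T$ applied to $(a,a),(b,c),(c,c)$ yields $(j_1(a,b,c),j_1(a,c,c))=(j_1(a,b,c),d)\in T$ and $(j_2(a,b,c),j_2(a,c,c))=(j_2(a,b,c),d)\in T$; combined with $d=j_1(a,c,c)=j_2(a,c,c)$ this feeds the pattern $RT\circ RT$ through $d$. The subtle point is to produce the $R$-components of these inner pairs, since the naive application of admissibility would require $(b,c)\in R$, which is not given. Here the main obstacle lies, and I expect that it must be handled by exploiting (J$_1$) and (J$_2$) simultaneously: both terms collapse at $(x,y,x)$, so one can pivot through elements like $j_i(a,b,a)=a$ and $j_i(c,b,c)=c$, using the single $R$-pair $(a,c)$ in combination with admissibility to relate $j_1(a,b,c)$ and $j_2(a,b,c)$ to the common value $d$ via $R$. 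This is the step I would plan to spend the most care on, possibly introducing composite terms in $j_1,j_2$ if the direct route fails.

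Once \eqref{3b} is in hand, \eqref{3a} is obtained by permuting the role of the middle intermediate $d$ so that the pattern of $S$'s and $T$'s alternates, and \eqref{3c} follows from \eqref{3b} by observing that when $T=\gamma$ is a congruence the two middle steps $(j_1(a,b,c),d),(d,j_2(a,b,c))\in R\gamma$ compose, by the transitivity of $\gamma$, into a single pair in $R\gamma$, collapsing four factors to three. The tightness claimed for $k=4$ in the introduction is automatic from the fact that taking $R,S,T$ to be congruences in \eqref{3c} recovers the definition of $3$-distributivity itself.
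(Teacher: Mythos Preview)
Your four-element chain $a,\ j_1(a,b,c),\ d,\ j_2(a,b,c),\ c$ does not work, and the difficulty you flag is not a detail but the heart of the matter. First, the $T$-direction in your third step is already wrong: you need $(d,j_2(a,b,c))=(j_2(a,c,c),j_2(a,b,c))\in T$, which would require $(c,b)\in T$, but $T$ is not symmetric and you only have $(b,c)\in T$. Second, even granting that, the $R$-relations $(j_1(a,b,c),d)\in R$ and $(d,j_2(a,b,c))\in R$ genuinely cannot be obtained from the single datum $(a,c)\in R$ via admissibility and the equations \eqref{ggg}: every application of (J$_1$) or (J$_2$) to produce an $R$-move collapses the term to $a$ or $c$, and $R$ is neither symmetric nor transitive, so knowing that both endpoints are $R$-related to $a$ (or to $c$) buys nothing. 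Your suggestion to ``pivot through $j_i(a,b,a)=a$ and $j_i(c,b,c)=c$'' runs into exactly this wall. Third, your reduction of \eqref{3c} to \eqref{3b} is also incorrect: transitivity of $\gamma$ does not make $R\gamma$ transitive, so the two middle $R\gamma$-steps do not automatically merge.

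The paper resolves all three issues simultaneously by replacing your chain with doubly nested elements
\[
e=j_1(j_1(a,b,c),b,c),\quad f=j_1(j_1(a,c,c),c,c)=j_2(j_2(a,c,c),c,c),\quad g=j_2(j_2(a,c,c),j_2(b,c,c),c)
\]
(and a variant $g^{\bullet}$ for \eqref{3a}). The outer nesting creates extra argument slots so that the needed $R$-moves can be manufactured from $(a,c)\in R$ together with auxiliary relations like $(e,c)\in R$ and $(f,c)\in R$ that are established first; and the specific form of $g$ arranges that every $T$-move uses $(b,c)$ in the correct orientation. For \eqref{3c} the paper explicitly notes that $R\gamma$ need not be transitive and proves $(e,g)\in R$ separately, using the already-established $(e,c)\in R$. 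The ``simple'' chain you propose works only in the special case where $R=\alpha$ is a congruence, which the paper mentions at the end of its proof.
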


Recall the equations displayed in  \eqref{ggg}
at the beginning of the previous section. 
 Before proving Theorem \ref{3dist} and for later use,
we shall state in a lemma those parts of the proof 
which do not need the equation (J$_2$).
We first  give a name to some elements which will be frequently used.

\begin{convention} \labbel{conv} 
If $a$, $b$ and  $c$ are elements of some algebra
and $j_1$ and  $j_2$ are ternary terms, we shall denote by
$e$, $f$, $g$, $g^{\bullet}$ the following elements.
\begin{align*}
e &= j_1(j_1(a,b,c),b,c),
\\
f &=   j_1(j_1(a,c,c),c,c) 
= ^{\text{(J$_C$)}} j_2(j_2(a,c,c),c,c),
\\
g &=  j_2(j_2(a,c,c),j_2(b,c,c),c),
\\
  g^{\bullet} &=  j_2(j_2(a,c,c),j_2(a,b,c),c).
 \end{align*}

Since the elements 
$a$, $b$ and  $c$ 
and the terms $j_1$ and  $j_2$ 
shall be kept fixed throughout this section,
we do not indicate explicitly the dependence.
\end{convention}

\begin{lemma} \labbel{lem3}
Under the above Convention, if 
$a \mathrel R c$,
$a \mathrel {S } b \mathrel { T} c $
and $j_1$ and  $j_2$ satisfy
(J$_L$), (J$_C$), (J$_R$)  and (J$_1$), 
then the following relations hold.
\begin{align*}
e \mathrel { R} c,
 && f \mathrel { R} c,  
 && a \mathrel {R  S} e \mathrel { R  T}
 f \mathrel {  T} g \mathrel {  S} c,
&& f \mathrel S g^{\bullet} \mathrel T c.
  \end{align*}      
 \end{lemma}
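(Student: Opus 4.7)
The plan is to establish each of the listed memberships by a single uniform device: for every claim of the form $(u,v)\in X$ with $X\in\{R,S,T\}$, I exhibit a unary polynomial $p$ of the algebra and one of the three seed pairs $(a,c)\in R$, $(a,b)\in S$, $(b,c)\in T$ so that $p$ carries the seed pair to $(u,v)$; admissibility of $X$ then closes the argument. Each of (J$_L$), (J$_C$), (J$_R$), (J$_1$) will be invoked only to collapse a chosen endpoint of such an evaluation.

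The easy items are then almost mechanical. For $e\mathrel R c$ and $f\mathrel R c$, feed $(a,c)\in R$ through $j_1(j_1(x,b,c),b,c)$ and $j_1(j_1(x,c,c),c,c)$, collapsing the $c$-endpoints via (J$_1$) and (J$_L$) respectively. For $a\mathrel R e$, feed $(a,c)\in R$ through $j_1(j_1(a,b,x),b,x)$, collapsing the $a$-endpoint by two applications of (J$_1$). The companion $a\mathrel S e$ and the step $e\mathrel T f$ both come from feeding $(a,b)\in S$ and $(b,c)\in T$ through the same polynomial $q(y)=j_1(j_1(a,y,c),y,c)$, where (J$_L$) gives $q(a)=a$. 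For $f\mathrel T g$, apply $j_2(j_2(a,c,c),j_2(b,y,c),c)$ to $(b,c)\in T$: at $y=b$ the inner term collapses to $c$ by (J$_R$) and (J$_C$) identifies the outer result with $f$. For $g\mathrel S c$, apply $j_2(j_2(x,c,c),j_2(b,c,c),c)$ to $(a,b)\in S$, using (J$_R$) to collapse the $b$-endpoint. Finally, both parts of $f\mathrel S g^{\bullet}\mathrel T c$ are handled by the single polynomial $j_2(j_2(a,c,c),j_2(a,y,c),c)$ evaluated at $y\in\{a,b,c\}$, with (J$_R$) collapsing the $y=a$ and $y=c$ values to $f$ and $c$.

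The main obstacle is the $R$-component of $e\mathrel{RT}f$, namely $e\mathrel R f$ itself. Unlike the other claims, this one is not supplied by the polynomial $q$ that produced the $T$-part, and the obvious polynomials on $(a,c)\in R$ send the seed pair either to $(e,c)$ or to $(a,e)$. Since $e$ and $f$ differ precisely in positions occupied by $b$ versus $c$, and no $R$-link between $b$ and $c$ is available, the required polynomial must hide every occurrence of $b$ via a (J$_R$)-style absorption $j_2(x,x,z)=z$ and use (J$_C$) to bridge between $j_1$ and $j_2$, so that the lone substitution $a\to c$ simultaneously transforms $e$ into $f$. This is the step around which the whole lemma is organised, and I expect it to absorb most of the effort; the remaining items are routine once such a polynomial is in hand.
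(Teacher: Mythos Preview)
Your routine verifications are correct and match the paper line for line. The gap is precisely the step you flag as the ``main obstacle'', $e \mathrel R f$: you do not actually produce the polynomial, and your diagnosis of what it must look like is off. You have boxed yourself in by insisting on a \emph{unary} polynomial applied to a \emph{single} seed pair from $\{(a,c),(a,b),(b,c)\}$. Admissibility allows several $R$-related pairs to be fed simultaneously into a term, and at this point you already have a second $R$-seed in hand: $e \mathrel R c$, which you proved two lines earlier.

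The paper exploits exactly this. With both $(a,c)\in R$ and $(e,c)\in R$ available, the binary polynomial $p(x,y)=j_2(j_2(a,x,c),c,y)$ does the job immediately:
\[
p(a,e)=j_2(j_2(a,a,c),c,e)=j_2(c,c,e)=e \quad\text{by two uses of (J$_R$)},\qquad
p(c,c)=j_2(j_2(a,c,c),c,c)=f.
\]
No (J$_C$) bridge or elaborate $b$-absorption is needed here; the ``hard'' step is in fact a two-line calculation once you drop the single-seed constraint. (If you prefer to stay within your unary framework, substitute $y=j_1(j_1(x,b,c),b,c)$, the witness for $e\mathrel R c$, and you recover a unary polynomial in $x$ sending $a\mapsto e$, $c\mapsto f$; but this is just a repackaging of the same idea.) So the lemma is no harder at this point than at any other, and your expectation that it should ``absorb most of the effort'' reflects the self-imposed restriction rather than the mathematics.
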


 \begin{proof} 
The proof is given by the following computations in which,
for clarity, we underline the elements which are moved by the
appropriate relations and, as usual by now, 
we indicate  as superscripts the identities we are using.
\begin{align*}
e= j_1(j_1( \underline{a}, b, c), b,c)
 & \mathrel { R }
 j_1(j_1( \underline{c}, b, c), b,c) = ^{\text{(J$_1$)}} c,
\\
f = j_2(j_2( \underline{ a}, c,  c), c, c) 
& \mathrel R 
 j_2(j_2( \underline{ c}, c,  c), c, c) 
=  ^{\text{(J$_R$)}} c,
\\
 a= ^{\text{(J$_1$)}} j_1(j_1(a, b, \underline{a}), b, \underline{a})
 & \mathrel { R} 
j_1(j_1(a, b, \underline{c}), b,\underline{c}) = e ,
\\
a= ^{\text{(J$_L$)}} j_1(j_1(a, \underline{a},c), \underline{a},c)
 & \mathrel { S}
 j_1(j_1(a, \underline{b},c), \underline{b},c) = e ,
\\
 e= ^{\text{(J$_R$)}}  j_2(j_2( a, \underline{a},  c), c, \underline{e}) 
& \mathrel { R } 
j_2(j_2( a, \underline{c},  c), c, \underline{c}) = f.
\\
e= j_1(j_1(a, \underline{b},c), \underline{b},c) 
& \mathrel {T }
  j_1(j_1(a, \underline{c},c), \underline{c},c) =f .
\\ 
f 
 =  ^{\text{(J$_R$)}}
 j_2(j_2( a, c,  c), j_2( b, \underline{b},  c) , c)
& \mathrel { T }
 j_2(j_2( a, c,  c), j_2( b, \underline{c},  c) , c)  = g,
\\
g = j_2 ( j_2 ( \underline{a},c,c), j_2 (b, c ,c),  c  ) 
& \mathrel { S } 
 j_2 ( j_2 ( \underline{b},c,c), j_2 (b, c ,c),  c  ) 
=^{\text{(J$_R$)}} c,
\\
f  
 =  ^{\text{(J$_R$)}}
 j_2(j_2( a, c,  c), j_2( a, \underline{a},  c) , c)
& \mathrel { S }
  j_2(j_2( a, c,  c), j_2( a, \underline{b},  c) , c) = g^{\bullet},
\\
 g^{\bullet} = j_2 ( j_2 ( a,c,c), j_2 (a, \underline{ b} ,c),  c  ) 
& \mathrel { T } 
 j_2 ( j_2 ( a,c,c), j_2 (a, \underline{ c} ,c),  c  ) 
= ^{\text{(J$_R$)}} c.
\qedhere
\end{align*}    
\end{proof}

  \begin{proof}[Proof of Theorem \ref{3dist}]  
Suppose that  $(a,c) \in   R  (S \circ T )$, 
hence $a \mathrel  R   c$ and
$a \mathrel {S } b \mathrel {T } c  $, for some $b$.
By $3$-distributivity 
and \cite{J},
we have J{\'o}nsson's terms
$j_1$ and  $j_2$ 
satisfying the displayed set of equations  \eqref{ggg}
at the beginning of the previous section. 
Because of Lemma \ref{lem3},
in order to prove 
\eqref{3b} and \eqref{3a} 
it is enough to prove  
$ f \mathrel {  R  } g \mathrel { R  } c$
and   
$ f \mathrel {  R  } g^{\bullet} \mathrel { R  } c$.
We first prove 
$f \mathrel { R} j_2( a, c,  c)$
and we shall also use  
$f \mathrel { R} c$,
as proved in Lemma \ref{lem3}.
Here are the computations. 
\begin{align*} 
f 
 =  ^{\text{(J$_R$)}}
 j_2(j_2( a, c,  c), c, j_2( a, \underline{a},  c))
& \mathrel R 
j_2(j_2( a, c,  c), c, j_2( a, \underline{c},  c)) 
 =  ^{\text{(J$_2$)}}
j_2( a, c,  c), 
\\
f =  ^{\text{(J$_2$)}} j_2 ( \underline{ f}, j_2 (b,c,c), \underline{ f}  )
& \mathrel { R } 
j_2 ( \underline{ j_2 (a,c,c)}, j_2 (b,c,c), \underline{ c}  ) = g,
\\
 g = j_2 ( j_2 ( \underline{a},c,c), j_2 (b, c ,c),  c  ) 
& \mathrel { R } 
 j_2 ( j_2 ( \underline{c},c,c), j_2 (b, c ,c),  c  ) 
=^{\text{(J$_2$)}} c.
\\
f  =  ^{\text{(J$_2$)}} j_2 ( \underline{ f}, j_2 (a,b,c), \underline{ f}  )
& \mathrel { R } 
j_2 ( \underline{ j_2 (a,c,c)}, j_2 (a,b,c), \underline{ c}  ) = g^{\bullet},
\\
g^{\bullet} = j_2 ( j_2 ( \underline{a},c,c), j_2 (a, b ,c),  c  ) 
& \mathrel { R } 
 j_2 ( j_2 ( \underline{c},c,c), j_2 (a, b ,c),  c  ) 
=^{\text{(J$_2$)}} c.
 \end{align*}     
 
Were $R \gamma $ a transitive relation,
\eqref{3c} would follow from \eqref{3b}
with $T = \gamma $.
In the general case, since $\gamma$ is supposed to be a congruence, hence
transitive, we have $e \mathrel { \gamma } g $
from Lemma \ref{lem3}.
In order to prove \eqref{3c}
it is then enough to show 
 $e \mathrel { R } g $.
We have already proved that
$e \mathrel { R } c$, thus
\begin{align*}
e=^{\text{(J$_R$)}} j_2( a, \underline{a}, \underline{ e})
 & \mathrel { R }
 j_2(a, \underline{ c}, \underline{ c}), \text{ hence } 
\\
e=^{\text{(J$_2$)}} j_2( \underline{e}, j_2(b,c,c), \underline{ e})
 & \mathrel { R }
j_2( \underline{j_2(a,c,c)}, j_2(b,c,c), \underline{ c}) =g.
 \end{align*}    

In the special case when
$R $ is a congruence, a 
direct simpler proof of \eqref{3c}
is possible. 
Take $R = \alpha $ and  notice that
if $(a,c) \in   \alpha  (S \circ  \gamma )$, 
as witnessed by $b$, then 
$ a \mathrel { \alpha  S} j_1(a,b,c) \mathrel {   \alpha   \gamma } j_2(a,b,c) 
\mathrel { \alpha  S} c  $. For example,  
$j_1(a,b,c) \mathrel {  \gamma } j_1(a,c,c) =   j_2(a,c,c)
\mathrel \gamma  j_2(a,b,c) $. The rest is more direct and easier.
Cf.\ also \cite[Remark 17]{contol}. 
\end{proof}

\begin{remark} \labbel{uarr}
As we mentioned, if we take 
$R$, $S$ and $T$ congruences in 
the identities \eqref{3b} and  \eqref{3c},
we get back $3$-distributivity.
On the other hand, 
if we take 
$R$, $S$ and $T$ congruences in
\eqref{3a}, we get only $4$-distributivity.
This suggests that perhaps,
for general $n$-distributive varieties, 
 one gets cleaner
results when considering identities of the form
\begin{equation*}\labbel{jrb}
 \alpha (S \circ T) \subseteq
 \alpha S \circ  \alpha T \circ  \alpha T \circ \alpha S
\circ  \alpha S \circ  \alpha T \dots,
   \end{equation*}    
rather than identities of the form \eqref{jr}.
Compare also identities \eqref{anrrr} - \eqref{anrrrrcon} below.
More generally, there is  the possibility of considering identities like
\begin{equation}\labbel{jrc}
 \alpha (R_1 \circ R_2) \subseteq
 \alpha R_{f(0)} \circ  \alpha R_{f(1)}\circ  \alpha R_{f(2)} \circ 
 \dots,
   \end{equation}    
where $f $ is a function with codomain $\{ 1,2\}$.   
Some arguments presented in \cite[Section 4]{uar} 
suggest that considering identities like  
\eqref{jrc} is a very natural choice. 
 \end{remark}   

\subsection*{Two  Gumm terms} \labbel{tnt} 
Let us say that a variety $\mathcal {V}$
has \emph{$2$ Gumm terms}
if $\mathcal {V}$ has terms 
$j_1$ and  $j_2$
satisfying the identities 
 (J$_L$), (J$_C$), (J$_R$) and
(J$_1$) from \eqref{ggg} in Section \ref{tolimp}; thus we are leaving out identity
(J$_2$), as we did in Lemma \ref{lem3}.
Sometimes  a variety as above is said to have
$3$ Gumm terms, since 
a trivial projection $j_0$  
 onto the first coordinate
is counted.
Notice that some authors give the definition 
of Gumm terms with the ordering of variables and of terms
 reversed. This applies to some papers of ours, too, but here
it is more convenient to maintain the analogy 
with the common condition for $3$-distributivity. 
A variety $\mathcal {V}$ has 
$2$  Gumm terms if and only if 
$\mathcal {V}$ satisfies the congruence identity
$\alpha ( \beta \circ \gamma ) \subseteq \alpha  \beta  \circ
\alpha ( \gamma \circ \beta ) $.
Notice that if we take converses and exchange $\beta$ and
$\gamma$, we get 
$\alpha ( \beta \circ \gamma ) \subseteq \alpha ( \gamma \circ \beta ) \circ
\alpha \gamma $.
See, e.~g., \cite{G,adjt,jds,ntcm} for further details
and information. 

If $S$ is a reflexive binary relation on some 
algebra, we denote by $ \overline{S} $
the smallest admissible relation containing $S$.
In the following theorem 
we shall provide bounds for expressions like, say,
$  R  (S \circ T ) \circ R W $,
rather than simply
$  R  (S \circ T ) $.
If   $0$ denotes 
 the minimal congruence, i.e., the identity relation,
then one can always obtain the latter expression from the former,
by taking $W=0$. 
The reader might always assume to be in the simpler situation
when $W=0$ with just an exception.
We need the more involved formula \eqref{3ga} below
  with the actual presence 
of $RW$  in order to get the relation
$R  (S \circ T) \circ R  (S \circ T)
\subseteq 
R  S \circ RT \circ R  (S \circ T)$
in the course of the proof of Corollary \ref{cor3} below. 
Theorem \ref{3dist}, too, allows a version with an
$RW$ factor added; we have not included this more general version
in the statement
for simplicity and since we do not need it here. 
 In detail, arguing as in the proof of identities 
\eqref{3gb} and \eqref{3gc} below, 
we get that a $3$-distributive variety  satisfies
$   R  (S \circ T ) \circ R W 
 \subseteq 
 R  S \circ  R  T \circ  R T \circ  R  (\overline{S \cup W}) $,
as well as 
$ 
   R  (S \circ T ) \circ R W 
 \subseteq 
 R  S \circ  R  T \circ  R S \circ R  (\overline{T \cup W})
$. 

\begin{theorem} \labbel{3gumm} 
If $\mathcal {V}$ has $2$  Gumm terms, then
$\mathcal {V}$ satisfies 
\begin{align}    
\labbel{3ga}  
  R  (S \circ T ) \circ R W 
& \subseteq 
 R  S \circ  R  T \circ  R  (\overline{S \cup T \cup W}),
\\
\labbel{3gb} 
   R  (S \circ RT ) \circ R W 
& \subseteq 
 R  S \circ  R  T \circ  R T \circ  R  (\overline{S \cup W}),
\\
\labbel{3gc} 
   R  (RS \circ T ) \circ R W 
& \subseteq 
 R  S \circ  R  T \circ  R S \circ R  (\overline{T \cup W}).
 \end{align}
\end{theorem}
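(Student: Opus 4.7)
The plan is to follow the strategy of Theorem \ref{3dist}, noting that Lemma \ref{lem3} already uses only the four equations (J$_L$), (J$_C$), (J$_R$), (J$_1$) and so remains valid under the $2$-Gumm hypothesis. Given a pair $(a, c')$ in the left-hand side, extract witnesses $a \mathrel R c$, $a \mathrel S b$, $b \mathrel T c$ (together with the extra middle $R$-assumption for \eqref{3gb} and \eqref{3gc}), $c \mathrel R c'$, and $c \mathrel W c'$. Lemma \ref{lem3} then supplies the initial segment $a \mathrel{RS} e \mathrel{RT} f$ together with $f \mathrel T g \mathrel S c$, $f \mathrel S g^{\bullet} \mathrel T c$, and $f \mathrel R c$, accounting for the first two factors of each right-hand side.

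For \eqref{3gb}, the extra hypothesis $b \mathrel R c$ allows the admissibility argument that establishes $f \mathrel T g$ in Lemma \ref{lem3} (which rests on the pair $(b, c) \in T$) to be rerun with $(b, c) \in R$, producing the upgraded step $f \mathrel{RT} g$ and hence the third factor of the right-hand side. Identity \eqref{3gc} follows by the symmetric argument using the dual Lemma chain $f \mathrel S g^{\bullet} \mathrel T c$ together with the hypothesis $a \mathrel R b$. For \eqref{3ga}, no such upgrade is possible, so only the two initial factors $a \mathrel{RS} e \mathrel{RT} f$ are available, and the last factor $R(\overline{S \cup T \cup W})$ must accomplish the entire jump from $f$ to $c'$ in a single step.

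In each identity, the final factor is verified by establishing the two intersected memberships \emph{separately}, via \emph{different} witnessings of admissibility. For the $R$-membership, apply a term such as $p(x_1, \ldots, x_7) = j_2(j_2(x_1, x_2, x_3), j_2(x_4, x_5, x_6), x_7)$ (or a $5$-ary variant for \eqref{3ga}) to tuples whose nontrivial position-pairs are drawn from $(a, c)$, $(b, c)$, $(c, c') \in R$ and for which the right-hand tuple evaluates to $c'$ via the collapsing identity $j_2(c, c, c') = c'$ from (J$_R$). For the admissible-closure membership, reapply a similar term to \emph{different} tuples whose position-pairs lie in $S$, $T$ (in \eqref{3ga}), or $W$, supplemented by reflexive pairs, this time collapsing to $c'$ via $j_2(u, u, c') = c'$ with a compound $u$. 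Concretely, for \eqref{3ga}, $(f, c') \in \overline{S \cup T \cup W}$ is witnessed by the tuples $(a, c, c, b, b, c, c)$ and $(b, c, c', b, c, c', c')$, whose nontrivial pairs $(a, b) \in S$, $(b, c) \in T$, and $(c, c') \in W$ all lie in $S \cup T \cup W$.

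The main delicacy lies in the closure step: one must choose tuples that on the left evaluate to the correct element supplied by Lemma \ref{lem3} and on the right evaluate exactly to $c'$, using only position-pairs drawn from the permitted subunion of $S$, $T$, $W$ together with reflexive pairs. In \eqref{3ga} the absence of any extra middle-$R$ is precisely what forces the closure on the right-hand side to include $T$ alongside $S$ and $W$, because the witnessing tuples must use a $(b, c) \in T$ pair to connect $b$-positions arising in $f$ with the $c$- or $c'$-positions arising in $c'$.
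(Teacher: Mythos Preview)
Your proposal is correct and follows essentially the same route as the paper: use Lemma \ref{lem3} for the first two factors $a \mathrel{RS} e \mathrel{RT} f$, upgrade the third factor in \eqref{3gb}/\eqref{3gc} via the extra $R$-hypothesis, and verify the final factor by separate admissibility computations for the $R$-part and the closure-part using the term $j_2(j_2(x_1,x_2,x_3),j_2(x_4,x_5,x_6),x_7)$ collapsing via (J$_R$). One small slip in your generic description of the $R$-membership step: the pair $(b,c)\in R$ is available only for \eqref{3gb}; for \eqref{3gc} the relevant extra $R$-pair is $(a,b)$ (as you yourself noted earlier), and indeed the paper uses $a\to b$ at the fourth position when proving $g^{\bullet}\mathrel{R} c^{\bullet}$.
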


\begin{proof}
Suppose that $(a,c^{\bullet}) \in   R  (S \circ T )  \circ R W $ with 
$ a \mathrel { R} c $ and 
$a \mathrel {S } b \mathrel {T } c \mathrel { RW} c^{\bullet} $.
 By Lemma \ref{lem3}
we have $a \mathrel {  R  S } e \mathrel {   R  T} f  $, hence  
 in order to prove \eqref{3ga}
it is enough to compute
\begin{gather*}
f 
{ =  ^{\text{(J$_R$)}}}
 j_2(j_2( \underline{a}, c,  c), j_2( b, \underline{b},  c) , \underline{c})
  \mathrel { \overline{S \cup  T \cup W} }
  j_2(j_2( \underline{b}, c,  c), j_2( b, \underline{c},  c) , \underline{c}^{\bullet})
{=  ^{\text{(J$_R$)}}} c^{\bullet}, 
\\
f=  j_2(j_2( \underline{a}, c,  c),   c , \underline{c})
 \mathrel { R } \:
  j_2(j_2( \underline{c}, c,  c),   c , \underline{c}^{\bullet})
 =  ^{\text{(J$_R$)}} c^{\bullet}.
  \end{gather*}        

To prove \eqref{3gb},
take $T=RT$ in Lemma \ref{lem3},
thus
$f \mathrel { RT } g $.
We can also assume   
$b \mathrel R c$, hence 
\begin{gather*}
 g = j_2 ( j_2 ( \underline{a},c,c), j_2 ( \underline{b}, c ,c),  \underline{c}  ) 
 \mathrel { R } 
 j_2 ( j_2 ( \underline{c},c,c), j_2 ( \underline{c}, c ,c),  c^{\bullet}  ) 
=^{\text{(J$_R$)}} c^{\bullet},
\\ 
g = j_2 ( j_2 ( \underline{a},c,c), j_2 (b, c ,c),  \underline{ c } ) 
 \mathrel { \overline{S \cup W }} 
 j_2 ( j_2 ( \underline{b},c,c), j_2 (b, c ,c), \underline{  c }^{\bullet}  ) 
=^{\text{(J$_R$)}} c^{\bullet}.
\end{gather*}    

The proof of \eqref{3gc} is similar, using the element
$g^{\bullet}$.
This time we assume $S=RS$ and $a \mathrel { R} b $.
The relations still needing a proof are
\begin{gather*}
g^{\bullet} = 
j_2 ( j_2 ( \underline{a},c,c), j_2 ( \underline{a}, b ,c),  \underline{c}  ) 
 \mathrel { R } 
 j_2 ( j_2 ( \underline{c},c,c), j_2 ( \underline{ b} , b ,c),  \underline{  c }^{\bullet}  ) 
=^{\text{(J$_R$)}} c^{\bullet},
\\
 g^{\bullet} = j_2 ( j_2 ( a,c,c), j_2 (a, \underline{ b} ,c),  c  ) 
 \mathrel { \overline{T \cup W}} 
 j_2 ( j_2 ( a,c,c), j_2 (a, \underline{ c} ,c),  \underline{  c }^{\bullet}  ) 
=^{\text{(J$_R$)}} c^{\bullet}.
\qedhere \end{gather*}    
\end{proof}

\begin{remark} \labbel{malc} 
Using the methods in the proof of 
Lemma \ref{lemid}
we could have given Maltsev conditions for the identities
\eqref{3b} - \eqref{3ga} 
by means of the existence of  terms 
satisfying appropriate equations.
Then, following the lines of the proofs
of Theorems \ref{3dist} and \ref{3gumm},  
 one can construct explicitly these terms in function
of $j_1$ and  $j_2$. 
In the case at hand, dealing directly with 
relation identities seems simpler.
However, there is an aspect dealing with terms 
which deserves  mention.

The proofs of Theorems \ref{3dist} and \ref{3gumm}
implicitly use the terms
$d_1(x,y,z) = j_1(j_1(x,y,z),y,z)$
and
$d_2(x,y,z)= j_2(j_2(x,z,z),j_2(x,y,z),z)$.
If $j_1$ and  $j_2$
are J{\'o}nsson terms, then
the terms $d_1$ and  $d_2$ 
 constitute a set of 
two \emph{directed J{\'o}nsson terms} \cite{adjt,Z}, namely,
they satisfy the equations 
\begin{equation}
\begin{aligned} \labbel{2d} 
x&=d_1(x,x,z), \quad    &  d_1(x,z,z) &= d_2(x,x,z),    & \quad  d_2(x,z,z) &= z ,
\\
x&=d_1(x,y,x) ,           & &  &         d_2(x,y,x) &= x .
 \end{aligned}    
\end{equation}    
Compare the middle equation in \eqref{2d} with (J$_C$).
Moreover, in comparison with
Remark \ref{asym}, 
 notice that here there is full symmetry between $d_1$ and  $d_2$.
 
Among other, we have showed that a $3$-distributive 
variety has two  directed J{\'o}nsson terms.
This is the poor man's  version of the far more general result  
proved by Kazda,  Kozik,  McKenzie and Moore \cite{adjt} that
\emph{every} congruence distributive variety 
has a (generally longer) set of directed J{\'o}nsson  terms. 

Yet it is interesting to check which
parts of Theorem \ref{3dist} 
 follow just from the existence of 
two  directed J{\'o}nsson terms.
The existence of two  directed J{\'o}nsson terms
implies identity \eqref{3a}, limited to  the case 
when $R$ is assumed to be a tolerance,
by the case  $\ell =2$
in equation (3.1) in \cite[Proposition 3.1]{jds}.
 Notice that the number of terms is counted in
a different way in \cite{jds}, including also two 
trivial projections; here we adopt the counting
convention from \cite{adjt}.

 On the other hand, there are varieties
having two directed J{\'o}nsson terms
and which are not $3$-distributive:
see \cite[p.\ 11]{jds} and \cite{baker}.
Since
the identities \eqref{3b} and \eqref{3c}
do imply $3$-distributivity, we get that 
the existence of 
two  directed J{\'o}nsson terms
does not imply \eqref{3b} and \eqref{3c}.
In particular, our arguments go far beyond 
 the simple proof of the existence of directed J{\'o}nsson
terms and exploit
special particularities of $3$-distributive
varieties.

Similar remarks apply to 
Theorem  \ref{3gumm},
with reference to \emph{directed Gumm terms} \cite{adjt}.
\end{remark}

If $R$ is  a binary
relation, we let $R^*$ denote the transitive closure of $R$.

\begin{corollary} \labbel{cor3} 
 (1) A $3$-distributive variety satisfies
\begin{align} \labbel{3aa}      
R^*(S \circ T) ^* & = (RS \circ RT) ^* \text{ and, more generally,}
\\ 
\labbel{3bb}
 (R \circ V)^*(S \circ T) ^* & = (RS \circ RT \circ VS \circ VT) ^* .
 \end{align}     

(2)
If $\mathcal {V}$ has $2$  Gumm terms,
 then $\mathcal {V}$   satisfies
\begin{align}  \labbel{3cc}   
R^*(S \circ T) ^*   & = (RS \circ RT) ^* \circ 
 R  (\overline{S \cup T}),  
\\ 
 \labbel{3ccb}   
R^*(RS \circ T) ^*   & = (RS \circ RT) ^* , \text{ and } 
\\ 
\labbel{3dd}
 R ^*S  ^* & = (RS) ^* .
 \end{align}      
\end{corollary}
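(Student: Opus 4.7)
The $\supseteq$ inclusion in each identity is routine: verify that every generator on the RHS lies in the corresponding intersection on the LHS, and invoke transitivity (for the $2$-Gumm identities, also the inclusion $\overline{S \cup T} \subseteq (S \circ T)^*$, which holds because $(S \circ T)^*$ is itself admissible and reflexive, hence contains the smallest such relation above $S \cup T$). I therefore focus on the $\subseteq$ direction throughout.

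For \eqref{3aa}, the strategy is to iterate Theorem~\ref{3dist}\eqref{3b}. I first prove by induction on $n$ that, for every reflexive admissible relation $R'$,
\[
R' \cap (S \circ T)^n \subseteq (R'S \circ R'T)^*.
\]
The base case $n=1$ is \eqref{3b} itself, noting that the $4$-factor chain $R'S \circ R'T \circ R'T \circ R'S$ embeds in $(R'S \circ R'T)^*$ by reflexive padding. For $n \geq 2$, decompose $(S \circ T)^n = (S \circ T)^m \circ (S \circ T)^{n-m}$ with $1 \leq m < n$ (say $m = \lfloor n/2\rfloor$) and apply \eqref{3b} with $S' = (S \circ T)^m$, $T' = (S \circ T)^{n-m}$, both reflexive admissible; this expresses the LHS as a $4$-fold composition of relations of the form $R' \cap (S \circ T)^{<n}$, each contained in $(R'S \circ R'T)^*$ by the inductive hypothesis. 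Identity \eqref{3bb} is obtained by a parallel argument, treating $R \circ V$ as the outer reflexive admissible relation so as to produce on the right the $4$-step alternating pattern $RS \circ RT \circ VS \circ VT$.

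For Part~(2), in a $2$-Gumm variety I iterate Theorem~\ref{3gumm}\eqref{3ga} using its parameter $W$ as a slack accumulator. Setting $W = \overline{S \cup T}$ in \eqref{3ga} yields the absorption
\[
R(S \circ T) \circ R\overline{S \cup T} \subseteq RS \circ RT \circ R\overline{S \cup T},
\]
since $\overline{S \cup T \cup \overline{S \cup T}} = \overline{S \cup T}$; iterating this yields \eqref{3cc}. Identity \eqref{3ccb} is derived analogously from \eqref{3gc}, with the slack $R\overline{T}$ collapsing to $RT \subseteq (RS \circ RT)^*$ because $T$ is already admissible. Identity \eqref{3dd} is the specialization $T = S$ of \eqref{3cc}: $(S \circ S)^* = S^*$ since $S$ is reflexive, and the slack $R\overline{S \cup S} = RS$ is absorbed into $(RS)^* \circ RS = (RS)^*$.

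The main obstacle is the passage from the ``single-$R$'' claim proved by iteration to the ``$R^*$'' form appearing in \eqref{3aa}: applying the iterated \eqref{3b} with $R' = R^*$ yields only $R^* \cap (S \circ T)^* \subseteq (R^*S \circ R^*T)^*$, whereas the statement demands $(RS \circ RT)^*$. To close this gap, I prove Part~(2) first; in particular \eqref{3dd} gives $R^*S = R^* \cap S \subseteq R^* \cap S^* = (RS)^* \subseteq (RS \circ RT)^*$, and similarly $R^*T \subseteq (RS \circ RT)^*$, whence $(R^*S \circ R^*T)^* \subseteq (RS \circ RT)^*$ by transitivity. This completes \eqref{3aa}, and \eqref{3bb} follows along the same lines.
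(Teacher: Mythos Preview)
Your overall strategy is close to the paper's, but there is a genuine circularity in Part~(2). You derive \eqref{3dd} as the specialization $T=S$ of \eqref{3cc}, and then use \eqref{3dd} to upgrade from $R$ to $R^*$ in Part~(1). However, your derivation of \eqref{3cc} itself never establishes the $R^*$ on its left-hand side. The absorption $R(S\circ T)\circ R\,\overline{S\cup T}\subseteq RS\circ RT\circ R\,\overline{S\cup T}$ that you obtain from \eqref{3ga} with $W=\overline{S\cup T}$ involves only $R$, and it does not iterate cleanly: prepending another factor $R(S\circ T)$ gives $R(S\circ T)\circ RS\circ RT\circ R\,\overline{S\cup T}$, and the leading pair $R(S\circ T)\circ RS$ is not of the shape $R(S\circ T)\circ RW$ required to reapply \eqref{3ga}. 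At best your iteration yields a bound on $(R(S\circ T))^*$, not on $R^*(S\circ T)^*$. So when you then set $T=S$ you only get $R\cap S^*\subseteq (RS)^*$, not \eqref{3dd}; and without \eqref{3dd} you cannot close the $R\to R^*$ gap you correctly identify in the last paragraph.

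The paper breaks the circle by proving \eqref{3dd} \emph{first} and independently: from \eqref{3ga} with $W=0$ and $S^n$ in place of both $S$ and $T$ one gets $R\cap S^{2n}\subseteq (R\cap S^n)^3$, hence $R\cap S^*\subseteq (RS)^*$ by induction; then substituting $R^*$ for $R$ and swapping the roles of $R$ and $S$ gives $R^*S^*\subseteq (R^*S)^*\subseteq ((RS)^*)^*=(RS)^*$. With \eqref{3dd} in hand, $R^*(S\circ T)^*=(R(S\circ T))^*$, and the iteration for \eqref{3cc} is done with $W=S\circ T$ (not $\overline{S\cup T}$): this gives $R(S\circ T)\circ R(S\circ T)\subseteq RS\circ RT\circ R(S\circ T)$, which \emph{does} iterate to $(R(S\circ T))^n\subseteq (RS\circ RT)^{n-1}\circ R(S\circ T)$, after which a single application of \eqref{3ga} with $W=0$ replaces the trailing factor by $RS\circ RT\circ R\,\overline{S\cup T}$. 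Your sketch for \eqref{3bb} (``treating $R\circ V$ as the outer relation to produce the $4$-step pattern'') is also not right as stated: applying \eqref{3b} with outer relation $R\circ V$ produces factors $(R\circ V)S$ and $(R\circ V)T$, not $RS,RT,VS,VT$; the paper instead obtains \eqref{3bb} by two applications of \eqref{3aa}.
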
 

\begin{proof}
We first prove identity \eqref{3dd} in (2).
Of course, 
\eqref{3dd} is the particular case
$S= T$ of \eqref{3cc},
however, it seems convenient to obtain  
the latter identity as a consequence of the former, hence
let us  prove \eqref{3dd}.
Let $S^n$
denote the iterated relational composition of $S$ 
with itself  with a total of $n$ factors and recall that
 $0$ denotes 
 the minimal congruence.
By taking $W=0$ and  
$S^n$ 
 in place of both $S$ and $T$ 
in \eqref{3ga},  
we get
 $RS^{2n} = R(S^{n} \circ S^{n})
\subseteq 
RS^{n} \circ RS^{n} \circ RS^{n}$. 
Since 
$RS^* = \bigcup _{m \in \omega } RS^{m} $,
we get by induction that
$RS^* \subseteq (RS)^*$. 
By taking $R^*$ in place of $R$
in the above identity and then exchanging the role of 
$R$ and $S$ we get  
$R^*S^* \subseteq (R^*S)^* \subseteq 
((RS)^*)^* = (RS)^* $.
The reverse inclusion in  \eqref{3dd}  is trivial.

Having proved \eqref{3dd}, then
by taking $S \circ T $ in place of $S$ in 
\eqref{3dd}, we get
$R^*(S \circ T) ^*  
= 
( R  (S \circ T))^*$.
By taking $W=S \circ T$
in  \eqref{3ga}, we get
$R  (S \circ T) \circ R  (S \circ T)
\subseteq 
R  S \circ RT \circ R  (S \circ T)$.
Then by induction, always 
factoring out the first compound factor on the left, 
we get
\begin{equation}\labbel{eqeq}
  ( R  (S \circ T))^* 
\subseteq 
(R  S \circ RT)^* \circ R  (S \circ T).
  \end{equation}    
Applying again 
 \eqref{3ga} with 
$W=0$, we get 
 $R  (S \circ T) \subseteq R  S \circ RT \circ R ( \overline{S \cup T} ) $. 
Summing everything up, 
we get the $ \subseteq $ inclusion in
 \eqref{3cc}. The reverse inclusion is trivial,
since $S \circ T \supseteq \overline{S \cup T} $. 

Applying \eqref{3gc} with $W=0$,
we get 
$    R  (RS \circ T ) 
 \subseteq 
 R  S \circ  R  T \circ  R S \circ R T$.
By \eqref{eqeq} with $RS$
in place of $S$    we get the
$ \subseteq $ inclusion in
 \eqref{3ccb}. Again, the reverse inclusion is trivial,
We have proved (2).

Now 
\eqref{3aa} is immediate from
\eqref{3cc} 
and \eqref{3a}.
Identity \eqref{3bb}
follows by repeated applications of 
 \eqref{3aa}.    
Of course, identity 
\eqref{3aa}  can be also proved directly
from \eqref{3a} by using only a minimal part of the
above arguments.
 \end{proof}

\subsection*{Further comments} \labbel{fur3} 
Theorems  \ref{3dist} and \ref{3gumm}  can be improved in many ways.
For example,  notice that the ``middle'' element
$f= j_1(j_1(a,c,c),c,c) $ in the proof of Theorem \ref{3dist}
  does not depend on 
$b$. Hence the proof provides a bound for
$R (S_1 \circ T_1 )(S_2 \circ T_2)(S_3 \circ T_3) \dots $.
Moreover, the element
$f$ is the same in the proofs of
\ref{3dist} and \ref{3gumm}.
In addition, the arguments in the proof of 
Theorem \ref{3gumm} clearly apply to
$3$-distributive varieties, as well
(here take $c^{\bullet}=c$ or, which is the same, $W=0$).
Moreover, in the notations from the proof of 
Theorem \ref{3dist},
we have
$a= ^{\text{(J$_L$)}} j_1(j_1(a, \underline{a},c), \underline{a},c)
 \mathrel { R}
 j_1(j_1(a, \underline{c},c), \underline{c},c) = f $.  
Summing everything up, we get 
items (1) and (2) in the following proposition.

As for (3) below, the definition of two directed J{\'o}nsson terms
recalled in \eqref{2d} from  Remark \ref{malc}
can be extended to any chain of terms; see 
\cite{adjt} for full details.  Then, merging the above arguments with the 
proof of \cite[Proposition 3.1]{jds}, we get a proof of (3) below.

\begin{proposition}  \labbel{ammennicol}
  \begin{enumerate}   
 \item  
A $3$-distributive variety
satisfies 
\begin{multline*}
R  (S_1 \circ T_1 )(S_2 \circ T_2)(S_3 \circ T_3) \dotsc
\subseteq 
 R (R S_1 \circ R T_1)(R S_2 \circ R T_2)
(R S_3 \circ R T_3) \dotsc  \circ 
\\
R (R S_1 \circ R T_1)
(R T_1 \circ R S_1)
(R S_2 \circ R T_2)(R T_2 \circ R S_2) \dots
 (\overline{S_1 \cup T_1})
(\overline{S_2 \cup T_2})\dotsc 
 \end{multline*}   

\item
A variety with $2$ Gumm terms satisfies
\begin{multline*}
R  (S_1 \circ T_1 )(S_2 \circ T_2)(S_3 \circ T_3) \dotsc
\subseteq 
 R (R S_1 \circ R T_1)(R S_2 \circ R T_2)
(R S_3 \circ R T_3) \dotsc  \circ 
\\
R 
 (\overline{S_1 \cup T_1})
(\overline{S_2 \cup T_2}) 
(\overline{S_3\cup T_3}) 
\dotsc 
 \end{multline*}   
\item
A variety with $n$ ($n+2$ in the terminology from
\cite{jds}) directed J{\'o}nsson terms 
satisfies
\begin{equation*}
\Theta  (S_1 \circ T_1 )(S_2 \circ T_2)(S_3 \circ T_3) \dotsc
\subseteq 
( \Theta (\Theta S_1 \circ \Theta T_1)(\Theta S_2 \circ \Theta T_2)
(\Theta S_3 \circ \Theta T_3) \dots )^n  .
  \end{equation*}    
 \end{enumerate} 
 \end{proposition}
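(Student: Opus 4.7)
The plan is to run, uniformly in every index $i$, the construction sketched in the paragraph preceding the statement. Fix $(a,c)$ in the left-hand side of (1) or (2), so $a \mathrel{R} c$ and, for each $i$, some witness $b_i$ satisfies $a \mathrel{S_i} b_i \mathrel{T_i} c$. With two Gumm terms $j_1, j_2$, set
\begin{equation*}
f = j_1(j_1(a,c,c),c,c), \qquad e_i = j_1(j_1(a,b_i,c),b_i,c),
\end{equation*}
\begin{equation*}
g_i = j_2(j_2(a,c,c),j_2(b_i,c,c),c), \qquad g_i^{\bullet} = j_2(j_2(a,c,c),j_2(a,b_i,c),c).
\end{equation*}
The decisive point, already emphasized just before the statement, is that $f$ depends only on $a$ and $c$, so one and the same element $f$ will serve every index $i$ at once.

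For each $i$, Lemma \ref{lem3} applied to the triple $(R, S_i, T_i)$ with witness $b_i$ supplies the arrows
\begin{equation*}
a \mathrel{R S_i} e_i \mathrel{R T_i} f, \qquad f \mathrel{T_i} g_i \mathrel{S_i} c, \qquad f \mathrel{S_i} g_i^{\bullet} \mathrel{T_i} c,
\end{equation*}
while the (J$_L$)- and (J$_R$)-substitutions recalled before the statement give $a \mathrel{R} f$ and $f \mathrel{R} c$. Under the hypothesis of (1), the last three computations in the proof of Theorem \ref{3dist} further yield $f \mathrel{R} g_i$ and $f \mathrel{R} g_i^{\bullet}$, upgrading the previous tolerance arrows to $f \mathrel{R T_i} g_i \mathrel{R S_i} c$ and $f \mathrel{R S_i} g_i^{\bullet} \mathrel{R T_i} c$. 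In either (1) or (2), the $W = 0$ specialization of the computation in the proof of Theorem \ref{3gumm} supplies $f \mathrel{\overline{S_i \cup T_i}} c$. Splicing everything at the common middle point $f$ yields, simultaneously in $i$,
\begin{equation*}
(a,f) \in R (R S_i \circ R T_i), \qquad (f,c) \in R \, (R S_i \circ R T_i)(R T_i \circ R S_i)\, \overline{S_i \cup T_i},
\end{equation*}
which gives (1); dropping the two middle intersection factors, which are precisely the ones requiring (J$_2$), gives (2).

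For (3), I replace $j_1, j_2$ by $n$ directed J\'onsson terms $d_1, \dotsc, d_n$ and form the $b_i$-free chain $u_0 = a$, $u_k = d_k(a,c,c) = d_{k+1}(a,a,c)$ for $1 \le k < n$, and $u_n = c$. Admissibility combined with $a \mathrel{\Theta} c$ forces $u_{k-1} \mathrel{\Theta} u_k$, while the standard substitution
\begin{equation*}
u_{k-1} = d_k(a,a,c) \mathrel{S_i} d_k(a,b_i,c) \mathrel{T_i} d_k(a,c,c) = u_k
\end{equation*}
places $(u_{k-1}, u_k) \in \Theta S_i \circ \Theta T_i$ simultaneously for every $i$, through the witness $d_k(a,b_i,c)$ independent in $i$. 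Hence $(u_{k-1}, u_k)$ lies in the single factor $\Theta(\Theta S_1 \circ \Theta T_1)(\Theta S_2 \circ \Theta T_2) \dotsc$, and composing over $k = 1, \dotsc, n$ deposits $(a,c)$ in the asserted $n$-fold composition.

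The main obstacle is bookkeeping. One must check that the middle element $f$ (for (1)--(2)), respectively the chain $u_0, \dotsc, u_n$ (for (3)), can legitimately be shared across all indices $i$, and that each of $R S_i$, $R T_i$, $R T_i \circ R S_i$, $\overline{S_i \cup T_i}$ is assigned to the correct segment (before or after $f$). The explicit formulas above settle this, but attention is needed in tracking which clause of Lemma~\ref{lem3}, Theorem~\ref{3dist}, or Theorem~\ref{3gumm} is responsible for each individual arrow.
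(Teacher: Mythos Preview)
Your treatment of (1) and (2) is correct and is precisely the paper's approach: exploit that the middle element $f$ is independent of the witnesses $b_i$, so the relations established in Lemma~\ref{lem3}, Theorem~\ref{3dist} and Theorem~\ref{3gumm} run uniformly over all indices. The minor slip is that the computations you invoke from Theorem~\ref{3dist} must also supply $g_i \mathrel{R} c$ and $g_i^{\bullet} \mathrel{R} c$ (not only $f \mathrel{R} g_i$, $f \mathrel{R} g_i^{\bullet}$), but those lines are indeed there.

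In (3) there is a genuine gap. Your displayed substitution shows only $u_{k-1}\mathrel{S_i} d_k(a,b_i,c)\mathrel{T_i} u_k$; together with $u_{k-1}\mathrel{\Theta} u_k$ this gives $(u_{k-1},u_k)\in \Theta\,(S_i\circ T_i)$, not $(u_{k-1},u_k)\in \Theta S_i \circ \Theta T_i$. For the latter you must check that the witness $d_k(a,b_i,c)$ is $\Theta$-related to \emph{each} endpoint, and since $\Theta$ is only a tolerance this does not follow from $u_{k-1}\mathrel{\Theta} u_k$. The missing step is exactly where the directed identity $d_k(x,y,x)=x$ and the symmetry of $\Theta$ enter: from $a\mathrel{\Theta} c$ one gets $u_{k-1}\mathrel{\Theta} a$ and $u_{k-1}\mathrel{\Theta} c$, whence
\[
u_{k-1}=d_k(u_{k-1},b_i,u_{k-1})\ \mathrel{\Theta}\ d_k(a,b_i,c),
\]
and similarly for $u_k$. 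This is the content behind the reference to \cite[Proposition 3.1]{jds} that the paper cites for (3). Incidentally, the witness $d_k(a,b_i,c)$ manifestly \emph{does} depend on $i$; what is independent of $i$ is the pair $(u_{k-1},u_k)$, and that is what makes the simultaneous intersection over $i$ possible.
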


By Corollary \ref{cor3} 
we get an affirmative solution to Problem 2.12 in Gyenizse and  Mar\'oti \cite{gm}
in the special cases of $3$-distributive varieties
and of varieties with $2$  Gumm terms.
Recall that a \emph{preorder}  is a
 reflexive and transitive relations. 

\begin{corollary} \labbel{corgm} 
If $\mathbf A$ belongs to a
$3$-distributive variety (a variety with $2$ Gumm terms)
then the lattice of  admissible preorders of $\mathbf A$ 
is congruence distributive 
(respectively, congruence modular).
\end{corollary}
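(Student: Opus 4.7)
The plan is to reduce the two assertions to the identities of Corollary \ref{cor3}. First one identifies the lattice operations on admissible preorders of $\mathbf A$: the meet of two admissible preorders is their intersection (clearly still an admissible preorder), while the join of $R$ and $S$ equals $(R \circ S)^*$. Indeed, $R \circ S$ is a reflexive admissible relation (easy check), so its transitive closure is an admissible preorder containing both $R$ and $S$; conversely, any admissible preorder containing $R \cup S$ must contain $R \circ S$ by transitivity, hence also $(R \circ S)^*$. In particular, for any admissible preorder $T$ one has $T^* = T$, which will be used crucially below.

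For distributivity in a $3$-distributive variety, the nontrivial inclusion is $T \cap (R \vee S) \subseteq (T \cap R) \vee (T \cap S)$. Identity \eqref{3aa} applied to $T$, $R$, $S$ yields
\[
T \cap (R \circ S)^* \;=\; T^* \cap (R \circ S)^* \;=\; ((T \cap R) \circ (T \cap S))^* \;\subseteq\; ((T \cap R) \cup (T \cap S))^*,
\]
which is exactly $(T \cap R) \vee (T \cap S)$. The opposite inclusion is immediate from the transitivity of $T$.

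For modularity under the assumption of $2$ Gumm terms, given admissible preorders $R \subseteq T$ and $S$, the nontrivial inclusion is $T \cap (R \vee S) \subseteq R \vee (T \cap S)$. Applying identity \eqref{3ccb} with $T$ playing the outer role and $R, S$ the inner ones gives $T^* \cap ((T \cap R) \circ S)^* = ((T \cap R) \circ (T \cap S))^*$, which under the simplifications $T \cap R = R$ and $T^* = T$ becomes
\[
T \cap (R \circ S)^* \;=\; (R \circ (T \cap S))^* \;\subseteq\; (R \cup (T \cap S))^* \;=\; R \vee (T \cap S),
\]
completing the modular law. The reverse inclusion again uses only that $T$ is transitive and contains $R$.

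I do not anticipate serious obstacles, since the identities in Corollary \ref{cor3} were crafted with exactly this sort of application in mind; the only nonroutine verification is that the transitive closure of a reflexive admissible relation is again admissible (so that $(R \circ S)^* = R \vee S$), which one handles by the standard trick of padding chains to a common length using reflexivity and then applying basic operations column by column.
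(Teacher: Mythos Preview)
Your proof is correct and follows exactly the route the paper intends: the paper's own proof is simply ``Immediate from identities \eqref{3aa} and \eqref{3ccb},'' and you have spelled out precisely those details (identifying meet and join in the preorder lattice, then applying \eqref{3aa} for distributivity and \eqref{3ccb} for modularity). The only superfluous step is the detour through $((T\cap R)\cup(T\cap S))^*$ in the distributive case---you can stop directly at $((T\cap R)\circ(T\cap S))^* = (T\cap R)\vee(T\cap S)$, since that is already the join by your first paragraph.
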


\begin{proof} 
Immediate from identities \eqref{3aa} and \eqref{3ccb}.   
\end{proof}

\section{Assuming $n$-permutabiliy} \labbel{np}

The length of a chain of iterated relational compositions
is bounded in a congruence  $n$-permutable variety.
Hence, assuming also congruence 
distributivity,  we get bounds for the value of $k$
in the identity \eqref{jr} from the introduction.
This observation shall be dealt with in 
Theorem  \ref{npermd} below. Since we are dealing with
admissible relations, not congruences, the bound is not exactly 
$n$, in general.  See Corollary \ref{equiv} below. 
  Recall that a variety $\mathcal {V}$ is \emph{congruence $n$-permutable} if
the congruence identity
$ \beta \circ \gamma \circ \beta \dotsc = 
\gamma \circ \beta \circ \gamma \dotsc $
($n$ factors on each side) holds in $\mathcal {V}$.

We shall use  the following conventions, in order to 
make the notation more compact.

\begin{convention} \labbel{conv2}The expression
$ S \circ T \circ  \stackrel{m}{\dots} \ $ denotes
$ S \circ T \circ S \dots$
with $m$ factors, that is, 
with $m-1$ occurrences of $\circ$.
If, say, $m$ is odd, we sometimes  write
$ S \circ T \circ  {\stackrel{m}{\dots}} \circ S $
when we want to make clear that $S$ is the last factor
in the expression.
We shall use the above notation even when
$R$ or $T$ are replaced with compound factors,
or even when they do not strictly alternate, such as in the expression
\begin{equation}\labbel{prv}
     \alpha S \circ \alpha T \circ  \alpha T \circ \alpha S 
\circ \alpha S \circ \alpha T \circ {\stackrel{m}{\dots}} 
   \end{equation}
In any case, the number above the dots always indicates 
the number of $\circ$'s minus one, that is, the number
of ``factors'', with the provision that compound
factors are always counted as one factor.  For example, if $m=10$
in \eqref{prv}, the expression reads  
\begin{equation*}
     \alpha S \circ \alpha T \circ  \alpha T \circ \alpha S 
\circ \alpha S \circ \alpha T \circ  \alpha T \circ \alpha S 
\circ \alpha S \circ \alpha T   .
  \end{equation*}
In any case, when we write only two factors on the right as in 
$ \alpha S \circ \alpha T \circ  \stackrel{m}{\dots} \ $ or in
$ \alpha S \circ  \alpha T \circ  {\stackrel{2m-1}{\dots}} \circ \alpha S $,
we always mean that these two factors alternate
as $ \alpha S \circ  \alpha T  \circ \alpha S \circ  \alpha T  \dots \ $

Moreover, recall that if $R$ is a reflexive binary relation on some 
algebra, $ \overline{R} $
denotes the smallest admissible relation containing $R$
and that $R^*$ denotes the transitive closure of $R$.
Recall also the definition of $R^m$. In  the above notation,  $R^m $ is 
$R \circ R \circ {\stackrel{m}{\dots}}$.
 \end{convention}

Parts of the following proposition 
are known. See,  e.~g.,
Werner \cite{W},   Hagemann and  Mitschke
\cite{HM} and  reference [3] quoted  there.
However it seems that some parts of the  proposition 
are new.

\begin{proposition} \labbel{nperm}
Suppose that $n \geq 2$ and $\mathcal {V}$ is a variety.
Each of the 
following
identities is equivalent to 
congruence $n$-permutability of $\mathcal {V}$.
\begin{align} \labbel{ne}
R_1 \circ R_2 \circ \dotsc \circ R_{n}
&\subseteq 
 \overline{R_1 \cup R_2}  \circ  \overline{R_2 \cup R_3} 
\circ \dotsc \circ
  \overline{R_{n-1} \cup R_n } \,,
\\
\labbel{nra}
R^n &\subseteq  R ^{n-1},\\
\labbel{nr}
R^* &= R ^{n-1},
\\ 
\labbel{nrr}
(S \circ T)^* &= S \circ T\circ S \circ T \circ  {\stackrel{2n-2}{\dots}} \circ S \circ T ,
\\ 
\labbel{nrrr}
(S \circ T)^* &= S \circ T\circ T \circ S \circ  S \circ T
 \circ  {\stackrel{2n-2}{\dots}},
\\ 
   \labbel{nrrrr}
 S \circ T\circ T \circ S \circ  S  \circ T \circ  {\stackrel{2n-2}{\dots}}
&=
 T \circ S \circ  S \circ T \circ  T \circ S \circ  {\stackrel{2n-2}{\dots}},
\\
 \labbel{nrup}
S \circ T \circ  {\stackrel{2n-1}{\dots}} \circ S
& \subseteq 
T \circ S \circ  {\stackrel{2n-2}{\dots}} \circ S ,
\\
 \labbel{nrupp}
S \circ T \circ  {\stackrel{2n-1}{\dots}} \circ S
&=
T \circ S \circ  {\stackrel{2n-1}{\dots}} \circ T .
\end{align}   

In all the above identities we can equivalently
let $R_1,\dots, R, S, T$ be either reflexive and admissible relations 
or tolerances.

Moreover, identities \eqref{ne}, \eqref{nrrr} and  \eqref{nrrrr} are still equivalent to
congruence $n$-permutability if we let  $R_1,\dots,  S, T$ be congruences.
\end{proposition}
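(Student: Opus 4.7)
The plan is to treat identity \eqref{nra} (equivalently \eqref{nr}) as the classical ``hub'' of the proposition---a reflexive admissible $R$ satisfies $R^n \subseteq R^{n-1}$ in an $n$-permutable variety (Werner for $n=2$, Hagemann and Mitschke in general)---and to derive every other item from this single fact, together with the appropriate specialization to congruences. The equivalence \eqref{nra} $\Leftrightarrow$ \eqref{nr} is routine by induction on $m$, since \eqref{nra} yields $R^m \subseteq R^{n-1}$ for all $m \geq n-1$, and the reverse inclusion in \eqref{nr} is reflexivity. The equivalence of \eqref{nra} with congruence $n$-permutability is the standard Hagemann-Mitschke argument: given Hagemann-Mitschke terms $p_1,\dots,p_{n-1}$ and a chain $a_0\, R\, a_1\, R\, \dotsc\, R\, a_n$, the intermediate elements $p_i(a_0,a_i,a_n)$ witness $R^n \subseteq R^{n-1}$; conversely, specializing $R$ to a congruence gives the classical alternating $n$-permutability identity, since transitivity of a congruence reduces $R^{n-1}$ to $R$ itself.

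Once the hub is established, each remaining identity follows by applying \eqref{nra} or \eqref{nr} to a well-chosen compound reflexive admissible relation, possibly after padding with reflexive factors. Namely, \eqref{ne} is proved by applying the Hagemann-Mitschke terms to the heterogeneous chain $a_0\, R_1\, a_1 \dotsc R_n\, a_n$, with $b_i = p_i(a_0, a_i, a_n)$ landing in $\overline{R_i \cup R_{i+1}}$ thanks to the HM equations; identity \eqref{nrr} is \eqref{nr} applied to the reflexive admissible relation $R = S \circ T$; identity \eqref{nrup} follows by prepending a reflexive $T$-factor to its left-hand side, turning it into $(T \circ S)^n$, then invoking \eqref{nra} with $R = T \circ S$ to descend to $(T \circ S)^{n-1}$; and \eqref{nrupp} is \eqref{nrup} applied in both directions (swapping $S$ and $T$) combined with further reflexive padding. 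For \eqref{nrrr} and \eqref{nrrrr} both sides turn out to equal $(S \cup T)^{n-1}$: each side is obviously contained in $(S \cup T)^{2n-2}$ which collapses to $(S \cup T)^{n-1}$ by \eqref{nra} with $R = S \cup T$, while conversely any chain in $(S \cup T)^{n-1}$ embeds into the prescribed zigzag pattern by filling unused positions with reflexive pairs---a combinatorial check relying on the fact that each zigzag pattern contains $n-1$ $S$-slots and $n-1$ $T$-slots distributed monotonically enough to accommodate an arbitrary sequence of labels.

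Finally, for the last sentence of the proposition (congruence versions of \eqref{ne}, \eqref{nrrr}, \eqref{nrrrr} still imply $n$-permutability), one specializes $R_1,\dots,R_n$ or $S,T$ to two congruences $\alpha,\beta$ in a suitably alternating pattern; the transitivity of congruences then collapses the $\overline{\alpha \cup \beta}$-factors just enough to recover the classical identity $\alpha \circ \beta \circ \dotsc = \beta \circ \alpha \circ \dotsc$ with $n$ factors on each side. The main technical obstacle throughout is precisely this zigzag-pattern bookkeeping: in one direction, verifying that chains of length $2n-2$ or $2n-1$ in nonuniform alternating patterns all lie in a single power of the compound relation $S \cup T$ or $T \circ S$; in the other direction, ensuring that the congruence specialization of a given zigzag retains enough information to recover \eqref{nra}. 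In each case the calculation is elementary once the correct compound relation $R$ has been selected, which is why the list of equivalent identities can be extended as comprehensively as in the statement.
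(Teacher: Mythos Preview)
Your overall strategy coincides with the paper's: derive \eqref{ne}/\eqref{nra} from Hagemann--Mitschke terms, then pull every other identity from that hub by substituting a compound relation for $R$. Two technical points, however, keep your argument from covering the full statement.

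First, the relation $S \cup T$ is \emph{not} admissible in general (the union of two subalgebras of $A^2$ need not be a subalgebra), so you cannot apply \eqref{nra} with $R = S \cup T$. The paper uses the admissible closure $\overline{S \cup T}$ instead: one has $S,T \subseteq \overline{S \cup T} \subseteq S \circ T$ (and also $\subseteq T \circ S$), so $(S \circ T)^* = (\overline{S \cup T})^*$, and then \eqref{nr} applied to $R = \overline{S \cup T}$ together with the alternating bounds $\overline{S \cup T} \subseteq S \circ T$ and $\overline{S \cup T} \subseteq T \circ S$ yields the zigzag in \eqref{nrrr} directly, with no combinatorial slot-filling needed.

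Second, and more substantively, the proposition asserts that all the identities remain equivalent when the variables range over \emph{tolerances}, and your derivations of \eqref{nrr} and \eqref{nrup} via $R = S \circ T$ (respectively $R = T \circ S$) break down there: the composite $S \circ T$ of two tolerances is generally not symmetric, hence not a tolerance, so \eqref{nra}/\eqref{nr} for tolerances does not apply to it. The paper handles this uniformly by again routing through $R = \overline{S \cup T}$, which \emph{is} a tolerance whenever $S$ and $T$ are; this single device covers both the admissible-relation and tolerance cases simultaneously. A smaller point: your sentence ``specializing $R$ to a congruence gives the classical alternating $n$-permutability identity'' is not right as stated---taking $R$ a congruence in \eqref{nra} is vacuous by transitivity. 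The implication back to $n$-permutability has to pass through one of \eqref{nrrr}, \eqref{nrrrr}, or \eqref{nrupp} with $S,T$ congruences (as you in fact do later), or else through \eqref{nra} with $R = \overline{\alpha \cup \beta}$.
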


\begin{proof}
We shall give a proof which works  in both cases, either when
$R_1,\dots, R, \allowbreak  S, T$  are reflexive and admissible relations
or when they are tolerances. 
Notice that in  the identities  \eqref{nr} -
 \eqref{nrrr} 
the left-hand side is always larger than the right-hand side, 
hence it is enough to prove  the reverse  inclusion.
Let ($n$-perm) denote congruence $n$-permutability.  We shall prove
that ($n$-perm) $\Rightarrow $  \eqref{ne} for varieties and that
\eqref{ne} $\Rightarrow $  
\eqref{nra} $\Rightarrow $   \eqref{nr}  
 $\Rightarrow $  \eqref{nrrr}   $\Rightarrow $  \eqref{nrrrr}
$\Rightarrow $ ($n$-perm) and 
    \eqref{nr} 
$\Rightarrow $  \eqref{nrr} $\Rightarrow $
\eqref{nrup} $\Rightarrow $  \eqref{nrupp} $\Rightarrow $ 
\eqref{nra} 
hold in every algebra.

If, say, $\mathcal {V}$ is a congruence $3$-permutable variety, then,
by \cite{HM}, there are terms
$t_1$ and  $t_2$ such that $x=t_1(x,y,y)$,
$t_1(x,x,y) = t_2(x,y,y)$  and
 $t_2(x,x,y) =y$ are equations valid in $\mathcal {V}$.
If $a \mathrel { R_1} b \mathrel { R_2} c \mathrel { R_3} d$,
then
$a= t_1( \underline{a},b, \underline{b})
 \mathrel { \overline{R_1 \cup R_2} }
 t_1( \underline{b},b, \underline{c}) =
 t_2( \underline{b},c, \underline{c}) 
\mathrel { \overline{R_2 \cup R_3} } 
t_2( \underline{ c},c, \underline{d}) =d $,
hence \eqref{ne} follows in the case $n=3$. In general,
for $n \geq 2$, the same argument shows that every 
congruence $n$-permutable variety satisfies \eqref{ne}.

Taking $R_1= R_2 = \dots = R$ in \eqref{ne}, we get 
\eqref{nra}. Then
\eqref{nr} follows from a trivial induction.

 In order to prove that \eqref{nr} implies  \eqref{nrrr},
observe that  $S \circ T$ is a reflexive and admissible relation
which contains both $S$ and $T$, thus
$ S \circ T \supseteq  \overline{S \cup T}$.
Since  $S, T \subseteq \overline{S \cup T} $,
hence 
$S \circ  T \subseteq \overline{S \cup T} \circ \overline{S \cup T}$,
we get 
$(S \circ T)^* = (\overline{S \cup T})^*$.
By taking the  admissible relation   
$\overline{S \cup T}$ in place of $R$ in \eqref{nr} and using alternatively
 $ \overline{S \cup T} \subseteq S \circ T $ and
$ \overline{S \cup T} \subseteq T \circ S $,
we get \eqref{nrrr}. Notice that if $S$ and $T$ are tolerances,
then $R=\overline{S \cup T}$ is a tolerance, hence the proof works
also in the case when we are dealing with tolerances.
On the other hand, notice that   $S \circ T$
is not necessarily a tolerance, even when $S$ and $T$ are.

The implication  \eqref{nrrr} $\Rightarrow $     \eqref{nrrrr}
is obvious, since $(S \circ T)^* = (T \circ S)^*$. 

By taking 
$S$ and $T$ congruences in 
\eqref{nrrrr}, we get 
$ \alpha \circ \beta \circ {\stackrel{n}{\dots}}  =
 \beta \circ \alpha \circ {\stackrel{n}{\dots}} $,  that is,
congruence $n$-permutability. 
Notice that, since congruences are transitive,
$n-2$ factors annihilate on each side of \eqref{nrrrr},
hence we end up with $n$ actual factors on each side.   
Notice that we have only used transitivity of tolerances, we do not need
symmetry.

The proof  that \eqref{nr}
implies \eqref{nrr} is similar to the proof that
\eqref{nr}
implies \eqref{nrrr}.
Take $R=\overline{S \cup T}$ and always use
 $ \overline{S \cup T} \subseteq S \circ T $.
Notice that, were 
$S$ and $T$ reflexive and admissible relations,
we could have proceeded in a simpler way, since
$S \circ T$ is reflexive and admissible, too, hence
\eqref{nrr} is the special case of \eqref{nr}
obtained by considering $S \circ T$   
in place of $R$. However, the previous argument works also for tolerances.

The implication
\eqref{nrr} $\Rightarrow $  \eqref{nrup}
is trivial, exchanging the role of $S$  and $T$, since
$S \circ T \circ  {\stackrel{2n-1}{\dots}} \circ S
\subseteq (S \circ T)^*$. 

By identity  \eqref{nrup} we get
$S \circ T \circ  {\stackrel{2n-1}{\dots}} \circ S
\subseteq 
T \circ S \circ  {\stackrel{2n-2}{\dots}} 
\circ S
\subseteq 
T \circ S \circ  {\stackrel{2n-1}{\dots}} \circ S
\circ T$,
 thus by symmetry we get
equality of the outer expressions,  that is,  \eqref{nrupp}.  

Recall that 
$0$ denotes 
 the minimal congruence.
Taking $S=R$ and $T=0$   
in \eqref{nrupp} we get back \eqref{nra}.

Now we prove the last statement.
If some identity holds for reflexive and admissible relations, 
then it trivially holds  for congruences,
hence congruence $n$-permutable varieties satisfy  
the versions of \eqref{ne}, \eqref{nrrr} and \eqref{nrrrr}
when the variables range on such a restricted scope.
We already mentioned that \eqref{nrrrr} relative to congruences
does imply congruence $n$-permutability
and the argument is the same for 
\eqref{nrrr}, namely, take $S$ and $T$ congruences.
Finally, if $R_1=R_3= \dots = \alpha $
and $R_2=R_4= \dots = \beta  $ are congruences, then
\eqref{ne} implies $\alpha  \circ \beta \circ {\stackrel{n}{\dots}}   
\subseteq (\overline{ \alpha \cup \beta }) ^{n-1}  \subseteq 
\beta \circ \alpha \circ \alpha \circ \beta  \circ \beta \circ \alpha 
\circ {\stackrel{2n-2}{\dots}}  
=
\beta  \circ  \alpha  \circ {\stackrel{n}{\dots}}  $.
\end{proof}

\begin{remarks} \labbel{ror}
(a) The proof of Proposition \ref{nperm}
shows that, for every $n$, the identities 
\eqref{nra} -  \eqref{nrrr}, \eqref{nrup} - \eqref{nrupp}
are equivalent for every algebra.
Indeed,  identity \eqref{nrrr} implies \eqref{nr}, by taking
  $T=0$, all the rest has been mentioned in the proof of \ref{nperm}. 

(b) We now justify the assertion in the introduction
that the equivalence between 
congruence permutability and 
the identity $R \circ R =R$ is non trivial, 
in the sense that it holds for varieties, but not for single algebras.

On one hand,  in every algebra,
the identity $R \circ R =R$ for relations, or even 
the identity $\Theta \circ \Theta = \Theta$ 
for tolerances,
 do imply  
congruence permutability.
This follows from the implication
\eqref{nra} $\Rightarrow $  \eqref{nrrr}
in Proposition \ref{nperm}   
in the case $n=2$. 
More directly. if $\alpha$ and $\beta$ are congruences,
take $\Theta= \overline{ \alpha \cup \beta } $,
the smallest admissible relation containing $\alpha$ and $\beta$, 
getting 
$\alpha \circ \beta  \subseteq \Theta \circ \Theta = \Theta 
= \overline{ \alpha \cup \beta } \subseteq \beta \circ \alpha $.
Compare the proof of \cite[Proposition 13(a)]{contol}.  
 
On the other hand, let $A= \{ 0,1,2\} $
and $\mathbf A$ be the algebra on $A$ 
with only one unary operation $g$
defined by $g(0)=1$, $g(1)=g(2)=2$.
The lattice 
$ \con (\mathbf A)$    
 has $3$ elements, hence consists of pairwise permutable
congruences, 
the relation
$R = \{ (0,0), (1,1), (2,2),(0,1), (1,2)\} $
is reflexive and admissible,
but 
$R \circ R =R$ fails.
\end{remarks}

\begin{corollary} \labbel{equiv}
For every variety $\mathcal {V}$ and
every $n \geq 2$, the following conditions are equivalent.
 \begin{enumerate} 
  \item 
$\mathcal {V}$ is congruence $n$-permutable.
\item
$\mathcal {V}$ is admissible-preorder $n$-permutable.
\item
$\mathcal {V}$ is admissible-relation $2n-1$-permutable.
\item
$\mathcal {V}$ is tolerance $2n-1$-permutable. 
  \end{enumerate}  
 \end{corollary}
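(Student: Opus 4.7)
The plan is to reduce everything to Proposition \ref{nperm}. That proposition already yields the equivalences (1) $\Leftrightarrow$ (3) and (1) $\Leftrightarrow$ (4) directly: identity \eqref{nrupp} is precisely admissible-relation $2n-1$-permutability, and the final sentence of Proposition \ref{nperm} says the identity is equivalent to congruence $n$-permutability whether one restricts the variables to reflexive admissible relations or to tolerances. The implications (2) $\Rightarrow$ (1) and (3) $\Rightarrow$ (4) are trivial, since every congruence is an admissible preorder and every tolerance is a reflexive admissible relation.

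The only implication requiring genuine work is (1) $\Rightarrow$ (2). Given admissible preorders $P$ and $Q$ on some algebra in $\mathcal{V}$, I set $U = \overline{P \cup Q}$. By identity \eqref{nr} applied under assumption (1), $U^{*} = U^{n-1}$. The $n$-factor alternating compositions $P \circ Q \circ \stackrel{n}{\dots}$ and $Q \circ P \circ \stackrel{n}{\dots}$ are both contained in $U^{*}$, since $P, Q \subseteq U$. For the reverse inclusions, observe that both $P \circ Q$ and $Q \circ P$ are themselves admissible relations containing $P \cup Q$, hence $U \subseteq P \circ Q$ and $U \subseteq Q \circ P$. Substituting these two inclusions alternately into $U^{n-1} = U \circ U \circ \stackrel{n-1}{\dots}$ produces a composition of $n-1$ ``double factors'' alternating between $P \circ Q$ and $Q \circ P$; at each of the $n-2$ interior seams, the adjacency is either $Q \circ Q$ or $P \circ P$, which collapses by transitivity of $Q$ (respectively of $P$). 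The outcome is an $n$-factor alternating expression of the form $P \circ Q \circ P \circ \stackrel{n}{\dots}$ when the alternation begins with $P \circ Q$, and of the form $Q \circ P \circ Q \circ \stackrel{n}{\dots}$ when it begins with $Q \circ P$. Combining these with the reverse inclusions already noted yields the required equality, that is, admissible-preorder $n$-permutability.

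The only delicate step is the book-keeping in the alternating collapse: the $2(n-1)$ raw factors drop to $n$ after the $n-2$ transitivity collapses, landing exactly on the required alternating form of length $n$. This is precisely what distinguishes the preorder case from the admissible-relation case --- without transitivity, the seams $Q \circ Q$ and $P \circ P$ cannot be collapsed, which is why Proposition \ref{nperm} only delivers $2n-1$-permutability for general reflexive admissible relations, while preorders achieve genuine $n$-permutability.
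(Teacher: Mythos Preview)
Your proof is correct and follows essentially the same route as the paper. The paper's proof is terser: it invokes identity \eqref{nrrrr} for (1) $\Leftrightarrow$ (2) and identity \eqref{nrupp} for (1) $\Leftrightarrow$ (3) $\Leftrightarrow$ (4), leaving to the reader the observation that for transitive $S$, $T$ the repeated factors in \eqref{nrrrr} collapse to give exactly $n$-permutability. Your argument for (1) $\Rightarrow$ (2) via $U = \overline{P \cup Q}$ and \eqref{nr}, with the alternating substitution $U \subseteq P \circ Q$, $U \subseteq Q \circ P$ and the transitivity collapses at the seams, is precisely the argument by which \eqref{nrrr} and \eqref{nrrrr} are derived from \eqref{nr} in the proof of Proposition \ref{nperm}, specialized to preorders; you have simply unfolded what the paper leaves packaged inside the citation of \eqref{nrrrr}.
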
 

\begin{proof}
The equivalence of (1) and (2) follows from identity
\eqref{nrrrr}.

 The equivalence of (1), (3) and (4) follows from identity
\eqref{nrupp}.
 \end{proof}  

In particular, apart from the degenerate case,
congruence $n$-permutability 
is equivalent to admissible-relation $n$-permutability
if and only if $n=2$.   
Notice that the above results do not exactly clarify
the meaning of admissible-relation $n$-permutability 
for $n$ even. 
In fact, by identity \eqref{nrr} 
in Proposition \ref{nperm},
we get that 
every congruence $n$-permutable variety satisfies 
$S \circ T\circ S \circ T \circ  {\stackrel{2n-2}{\dots}}
= T\circ S \circ T \circ S \circ  {\stackrel{2n-2}{\dots}}$;
however, we do not know the exact consequences of this last identity.

Recall the notational conventions established in the abstract and in 
\ref{conv2}.

\begin{theorem} \labbel{npermd}
Suppose that $n \geq 2$ and $\mathcal {V}$ is a variety.
Each of the 
following
identities is equivalent to the conjunction of
congruence distributivity and  $n$-permutability of $\mathcal {V}$.
\begin{align} 
\labbel{anrr}
 (\Theta  (S \circ T)^*)^* 
&=
 \Theta  S \circ \Theta  T\circ \Theta  S \circ \Theta  T
 \circ {\stackrel{2n-2}{\dots}} \circ  \Theta  S \circ \Theta  T,
\\ 
\labbel{anrrr}
(\Theta  (S \circ T)^*)^*
 &= 
\Theta  S \circ \Theta  T\circ \Theta  T \circ \Theta  S 
\circ \Theta   S \circ \Theta  T  \circ {\stackrel{2n-2}{\dots}},
\\
   \labbel{anrrrrtol}
\Theta ( S \circ T\circ T \circ S \circ  S  \circ  {\stackrel{2n-2}{\dots}})
& \subseteq 
 \Theta T \circ \Theta S \circ \Theta S \circ \Theta T \circ
\Theta  T \circ \Theta S \circ  {\stackrel{2n-2}{\dots}},
\\   
\labbel{anrrrrcon}
\alpha ( S \circ T\circ T \circ S \circ  S   \circ  {\stackrel{2n-2}{\dots}})
& =
 \alpha T \circ \alpha S \circ \alpha S \circ \alpha T \circ
\alpha  T \circ \alpha S \circ  {\stackrel{2n-2}{\dots}},
\\
 \labbel{anrupp}
 \alpha (S \circ   T \circ  {\stackrel{2n-1}{\dots}} \circ  S)
&=
\alpha T \circ \alpha S \circ  {\stackrel{2n-1}{\dots}} \circ \alpha  T .
\end{align}   

The variables $S$ and $T$ 
can be equivalently taken to be tolerances in
\eqref{anrr} - \eqref{anrupp} 
and
congruences in \eqref{anrrr} - \eqref{anrrrrcon}.
\end{theorem}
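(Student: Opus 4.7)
The plan is to prove the biconditional in each case: each displayed identity is equivalent to the conjunction of congruence distributivity (CD) and congruence $n$-permutability. The forward direction combines the Kazda--Kozik--McKenzie--Moore strengthening of J\'onsson's theorem, recalled in the introduction, with Proposition \ref{nperm}; the backward direction follows by two natural specializations of the free variables.

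For the forward direction the crucial step is to establish, assuming only CD, the \emph{key lemma}
\[
(\Theta (S \circ T)^*)^* = (\Theta S \circ \Theta T)^*
\]
for tolerances (or reflexive admissible relations) $\Theta$, $S$, $T$. The $\supseteq$ inclusion is immediate: reflexivity of $T$ gives $\Theta S \subseteq \Theta(S \circ T)$, and similarly $\Theta T \subseteq \Theta(S \circ T)$, whence $\Theta S \circ \Theta T \subseteq \Theta (S \circ T)^*$, and the claim follows by taking transitive closures. The $\subseteq$ inclusion uses the CD relation identity \eqref{jr} for some fixed $k$: iterating the basic distribution $\Theta(R_1 \circ R_2) \subseteq \Theta R_1 \circ \Theta R_2 \circ \cdots$, one derives $\Theta((S \circ T)^m) \subseteq (\Theta S \circ \Theta T)^*$ by induction on $m$, and then takes the union over $m$ followed by transitive closure. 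Granted this lemma, identities \eqref{anrr} and \eqref{anrrr} follow by substituting the tolerances (or reflexive admissible relations) $\Theta S$ and $\Theta T$ into the $n$-permutability identities \eqref{nrr} and \eqref{nrrr} of Proposition \ref{nperm}. Identity \eqref{anrrrrtol} follows by observing that its right-hand side, after the swap $S \leftrightarrow T$ in \eqref{nrrr}, equals $(\Theta T \circ \Theta S)^* = (\Theta S \circ \Theta T)^*$, while its left-hand side is contained in this expression via iterated CD distribution of $\Theta$ across the chain inside the parentheses. Identity \eqref{anrupp} is analogous, using \eqref{nrupp}. For the congruence-only equality \eqref{anrrrrcon}, the reverse $\supseteq$ inclusion is automatic from $n$-permutability together with the trivial containment $\alpha\beta \circ \alpha\gamma \subseteq \alpha(\beta \circ \gamma)$ valid for congruences.

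For the backward direction, each identity must yield both CD and $n$-permutability. To extract $n$-permutability, substitute the universal relation $1$ for $\Theta$ (resp.\ $\alpha$); since intersection with $1$ is the identity operation, the identity collapses to the corresponding statement of Proposition \ref{nperm}, namely \eqref{nrr}, \eqref{nrrr}, \eqref{nrrrr}, or \eqref{nrupp}, each of which is equivalent to congruence $n$-permutability. To extract CD, specialize $S$ and $T$ to congruences: each right-hand side then becomes a finite composition of $\alpha\beta$-type factors of length at most $2n-1$, and the resulting identity has the form \eqref{jr} with $k \le 2n-1$, which by the strengthened J\'onsson theorem from \cite{adjt,jds} implies CD.

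The main obstacle is the key lemma: one must iterate the basic CD distribution carefully across composition chains of growing length, observing that although the size of the distributed expression on the right grows with $m$, its transitive closure is absorbed into the single expression $(\Theta S \circ \Theta T)^*$. A secondary care is needed in the tolerance inclusion \eqref{anrrrrtol}: the chain $S \circ T \circ T \circ S \circ S \circ \cdots$ inside $\Theta(\cdot)$ is not itself a tolerance, so one cannot naively invoke the key lemma; instead one distributes $\Theta$ across it factor-by-factor using CD, relying on $n$-permutability to terminate the resulting expansion at $2n-2$ factors.
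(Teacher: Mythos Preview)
Your proposal is correct and follows essentially the same route as the paper: derive the key inclusion $\Theta(S\circ T)^*\subseteq(\Theta S\circ\Theta T)^*$ from congruence distributivity, then feed $\Theta S$ and $\Theta T$ into the $n$-permutability identities of Proposition~\ref{nperm} for the forward direction, and specialize $\Theta=1$ (respectively $S,T$ congruences) to recover $n$-permutability (respectively distributivity) for the converse.

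One small point of care: you invoke ``identity~\eqref{jr}'' for the basic distribution $\Theta(R_1\circ R_2)\subseteq\Theta R_1\circ\Theta R_2\circ\cdots$, but \eqref{jr} as stated in the introduction has a congruence $\alpha$ on the outside, not a tolerance $\Theta$. The tolerance version is what you actually need for \eqref{anrr}--\eqref{anrrrrtol}, and it does hold, but it comes from the directed J\'onsson terms of \cite{adjt} together with \cite[Proposition~3.1]{jds} (or, equivalently, Proposition~\ref{ammennicol}(3) here), which is exactly what the paper cites at this step. With that citation in place your iteration argument goes through, and in fact the paper simply quotes the resulting inclusion $\Theta(S\circ T)^*\subseteq(\Theta S\circ\Theta T)^*$ rather than re-deriving it as you do. Also note that for \eqref{anrrrrtol} you need not distribute factor by factor: since $S\circ T\circ T\circ S\circ\cdots\subseteq(S\circ T)^*$, the left-hand side is already contained in $(\Theta S\circ\Theta T)^*$, which by \eqref{nrrr} equals the right-hand side.
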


\begin{proof}
The 
$  \supseteq $ inclusions  in   
\eqref{anrr},  \eqref{anrrr} and
\eqref{anrrrrcon}
are obvious.
Notice that in \eqref{anrrrrcon}  we need transitivity of $\alpha$.

 If $\mathcal {V}$ is congruence distributive,
then, by  Kazda,  Kozik,  McKenzie and Moore \cite{adjt}  and by
 \cite[Proposition 3.1]{jds},
$\Theta  (S \circ T )^* \subseteq 
(\Theta  S \circ \Theta   T )^* $.  
If in addition $\mathcal {V}$ is congruence $n$-permutable,  we can apply 
the identities \eqref{nrr}, \eqref{nrrr} and   \eqref{nrupp}  with $\Theta  S$
and $\Theta  T $ in place of, respectively,
$S$ and $T$, obtaining a bounded number of factors on the right.
Hence \eqref{anrr} - \eqref{anrupp} follow from the assumptions.
To prove the $ \supseteq $ inclusion in \eqref{anrupp} use 
\eqref{nrupp} in order to get 
 $\alpha T \circ \alpha S \circ  {\stackrel{2n-1}{\dots}} \circ \alpha  T 
= 
\alpha S \circ \alpha T \circ  {\stackrel{2n-1}{\dots}} \circ \alpha  S
\subseteq
 \alpha (S \circ   T \circ  {\stackrel{2n-1}{\dots}} \circ  S)$,
again since $\alpha$ is transitive.

Conversely, by taking $\Theta$, $S$ and $T$ congruences in each of
\eqref{anrr} - \eqref{anrupp}, we get identities implying
congruence distributivity by \cite{J}.
If we take $\Theta = \alpha = 1$, that is, the largest congruence,
in  \eqref{anrr} - \eqref{anrupp}, we get one of
\eqref{nrr} - \eqref{nrrrr}
or \eqref{nrupp}.
These identities imply congruence $n$-permutability by
Theorem \ref{nperm}.
\end{proof}

\begin{theorem} \labbel{npermm}
Suppose that $n \geq 2$ and $\mathcal {V}$ is a variety.
Each of the 
following
identities is equivalent to  the conjunction of
congruence modularity and $n$-permutability of $\mathcal {V}$.
\begin{align}
\labbel{nram}
\Theta R^n &\subseteq  (\Theta R) ^{n-1},
\\
\labbel{nrm}
\alpha R^* &= ( \alpha R )^{n-1},
\\ 
\labbel{nrrm}
\alpha (S \circ  \alpha T)^* & =
\alpha S \circ \alpha T\circ \alpha S \circ \alpha T \circ  {\stackrel{2n-2}{\dots}} 
\circ \alpha S \circ \alpha T ,
\\ 
\labbel{nrrrm}
 \alpha (S \circ \alpha T)^* &= \alpha  S \circ 
\alpha T\circ  \alpha T \circ \alpha  S \circ \alpha  S \circ \alpha T
 \circ  {\stackrel{2n-2}{\dots}},
\\
 \labbel{nruppm}
\alpha (S \circ \alpha T \circ  {\stackrel{2n-1}{\dots}}
\circ \alpha T \circ S)
&=
\alpha T \circ \alpha S \circ  {\stackrel{2n-1}{\dots}} \circ \alpha  T .
\end{align}    
The variables $R$, $S$ and $T$ 
can be equivalently taken to be tolerances 
in \eqref{nram} - \eqref{nruppm}
and
congruences in \eqref{nrrrm}.
\end{theorem}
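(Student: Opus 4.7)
The plan is to mirror the argument for Theorem \ref{npermd}, substituting congruence modularity for congruence distributivity throughout. As in that proof, the $\supseteq$ inclusions in \eqref{nrm}--\eqref{nrrrm} and one direction of \eqref{nruppm} are either obvious or follow from transitivity of $\alpha$, so the substantive content is in the $\subseteq$ inclusions. Likewise, for the converse one specializes the variables in two different ways to recover, on the one hand, a congruence identity equivalent to modularity, and, on the other hand, an identity from Proposition \ref{nperm} that is equivalent to $n$-permutability.

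For the forward direction I would invoke the modular analog of the Kazda--Kozik--McKenzie--Moore theorem: every congruence modular variety has a finite sequence of directed Gumm terms (see Remark \ref{malc}), which yields a relation identity of the form $\alpha(S \circ \alpha T) \subseteq \alpha S \circ \alpha T \circ \alpha S \circ \dotsc$ (some fixed number of factors), and hence $\alpha(S \circ \alpha T)^* \subseteq (\alpha S \circ \alpha T)^*$. This is the modular counterpart of the inclusion $\Theta(S \circ T)^* \subseteq (\Theta S \circ \Theta T)^*$ used in the proof of \ref{npermd}, and it explains why in the modular statement the inner composition has the form $S \circ \alpha T$ rather than $S \circ T$. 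Assuming in addition $n$-permutability, I would apply \eqref{nra}, \eqref{nrr}, \eqref{nrrr}, and \eqref{nrupp} of Proposition \ref{nperm} with $\alpha S$ and $\alpha T$ (respectively $\Theta R$) in place of $S$ and $T$ (respectively $R$), which bounds the length of the iterated compositions appearing on the right-hand sides of \eqref{nram}--\eqref{nruppm}. For the $\supseteq$ direction of \eqref{nruppm} one uses transitivity of $\alpha$ exactly as in the proof of \eqref{anrupp}.

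For the converse, I would specialize each of \eqref{nram}--\eqref{nruppm} twice. Taking $R$, $S$, $T$ to be congruences reduces each identity to a bounded-length congruence identity of the modular type, which is well known to imply congruence modularity (cf.\ the classical results in \cite{J,G}). Taking instead $\Theta = \alpha = 1$, the top congruence, reduces each of \eqref{nram}--\eqref{nruppm} to one of the identities \eqref{nra}, \eqref{nr}, \eqref{nrr}, \eqref{nrrr}, \eqref{nrupp} of Proposition \ref{nperm}, which is equivalent to $n$-permutability. The final claim about tolerances and congruences is then handled as in Theorem \ref{npermd}: the forward direction already works for tolerances, and in \eqref{nrrrm} the specialization to congruences still suffices to recover the two defining properties.

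The main obstacle is identifying and justifying the correct modular relation inclusion $\alpha(S \circ \alpha T)^* \subseteq (\alpha S \circ \alpha T)^*$ from directed Gumm terms, and in particular verifying that the internal $\alpha$ in $S \circ \alpha T$ cannot be dropped — this asymmetry between $S$ and $T$ is the essential modular feature and is precisely what survives the specialization back to congruences to recover modularity rather than distributivity. Once this is in hand, the combinatorial bookkeeping of the length bounds is routine and exactly parallels the distributive case treated in Theorem \ref{npermd}.
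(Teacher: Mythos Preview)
There are two genuine gaps.

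\textbf{Forward direction.} The identity you single out as the ``modular counterpart'', namely $\alpha(S \circ \alpha T) \subseteq \alpha S \circ \alpha T \circ \alpha S \circ \dotsc$, is trivial: in any algebra one has the \emph{equality} $\alpha(S \circ \alpha T) = \alpha S \circ \alpha T$ (if $a \mathrel{\alpha} c$ and $a \mathrel{S} b \mathrel{\alpha T} c$, then $b \mathrel{\alpha} c \mathrel{\alpha} a$, so $a \mathrel{\alpha S} b$). It therefore cannot carry any modular content, and your ``hence $\alpha(S \circ \alpha T)^* \subseteq (\alpha S \circ \alpha T)^*$'' does not follow: for $(S \circ \alpha T)^2$ the intermediate elements need not be $\alpha$-related to the endpoints, so the one-step identity does not iterate. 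The paper's actual engine is the single-relation inclusion $\Theta R^* \subseteq (\Theta R)^*$, taken from \cite{ricmc} (which in turn rests on directed Gumm terms from \cite{adjt}). With that in hand, \eqref{nram} and \eqref{nrm} follow immediately from \eqref{nr}, and \eqref{nrrm}--\eqref{nruppm} are obtained by substituting $R = S \circ \alpha T$ in \eqref{nrm} and only then using the elementary equality $\alpha(S \circ \alpha T) = \alpha S \circ \alpha T$. Your outline never reaches \eqref{nram} or \eqref{nrm} at all.

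\textbf{Converse direction.} For \eqref{nram} and \eqref{nrm}, specializing $R$ to a congruence $\gamma$ collapses the left side ($\gamma^n = \gamma$) and yields a tautology, so you cannot recover modularity that way. The paper instead takes $R = \beta \circ \alpha\gamma$ (a genuine non-transitive admissible relation) in \eqref{nrm}, and for the tolerance version takes the symmetric $R = \alpha\gamma \circ \beta \circ \alpha\gamma$; combined with $\alpha(\beta \circ \alpha\gamma) = \alpha\beta \circ \alpha\gamma$ this produces exactly a Day-type modularity condition. Your plan to ``take $R$, $S$, $T$ to be congruences'' works for \eqref{nrrm}--\eqref{nruppm} but fails for the first two identities, and the tolerance clause requires the further observation that $\alpha\gamma \circ \beta \circ \alpha\gamma$ is a tolerance satisfying $\alpha(\alpha\gamma \circ \beta \circ \alpha\gamma) = \alpha\gamma \circ \alpha\beta \circ \alpha\gamma$.
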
  

\begin{proof}
We have from
\cite[Theorem 1]{ricmc}
that a congruence modular variety satisfies 
$\Theta R^* \subseteq (\Theta R)^*$.
Again, this result relies heavily on  \cite{adjt}.
By congruence $n$-permutability and  \eqref{nr}
we get the nontrivial inclusions in 
\eqref{nram} and  \eqref{nrm}.

By taking $R=S \circ \alpha T$
in  \eqref{nrm} we get
$\alpha (S \circ  \alpha T)^* = 
(\alpha (S \circ  \alpha T))^{n-1} $.
But  
$\alpha (S \circ  \alpha T) = \alpha S \circ  \alpha T$,
hence we get
 $\alpha (S \circ  \alpha T)^* = 
(\alpha S \circ  \alpha T)^* $.
Then, as usual by now,
by applying Theorem \ref{nperm}
with $\alpha S$ in place of $S$ and
 $\alpha T$ in place of $T$,
we get the nontrivial inclusions in 
\eqref{nrrm} -  \eqref{nruppm}.

Taking 
$R= \beta \circ \alpha \gamma $
in \eqref{nrm} 
and using again 
$\alpha (S \circ  \alpha T) = \alpha S \circ  \alpha T$
we get congruence modularity.
Using \cite{D},
we prove in a similar way that 
\eqref{nram} implies congruence modularity.

The above arguments work when $R$, $S$ and $T$ 
are assumed to be reflexive and admissible relations.
If we assume  that $R = \Theta $, $S= \Psi$ and $T= \Phi$ 
are tolerances, argue as above using 
$R = \Theta = \alpha \Phi \circ \Psi \circ \alpha \Phi$
or 
$R = \Theta  = \alpha \gamma \circ \beta \circ \alpha \gamma $
notice that these are tolerances and that
$ \alpha (\alpha \Phi \circ \Psi \circ \alpha \Phi) =
\alpha \Phi \circ \alpha \Psi \circ \alpha \Phi$.

 By taking $S$ and $T$ congruences,
we have that \eqref{nrrm} - \eqref{nruppm}, too,
imply congruence modularity.
As in the proof of Theorem \ref{npermd},
the identities \eqref{nram} -  \eqref{nruppm}  imply congruence $n$-permutability,
 taking $\Theta= \alpha =1$
and considering the corresponding identities in Theorem \ref{nperm}.
 \end{proof}

\begin{remarks} \labbel{more}
(a) There are obviously many other intermediate equivalent
conditions in Proposition \ref{nperm}, Theorems \ref{npermd}
and \ref{npermm}. In order
to keep the statements
within a reasonable length,
  we have not explicitly stated such equivalent conditions.
For example, for every $m \geq n$,  
the identity  $R^m
\subseteq 
R ^{n-1} $
is equivalent to congruence $n$-permutability, 
as follows from the proof
of \ref{nperm}.
More generally, we can equivalently augment the number of factors
on the left in \eqref{nrrrr} - \eqref{nrupp}, \eqref{anrrrrtol} - \eqref{anrupp}, 
provided we replace $=$ with $\subseteq $.   
Similarly, we can replace transitive closure 
 with a sufficiently long iteration of compositions in 
each of \eqref{nr} - \eqref{nrrr}, \eqref{anrr}, \eqref{anrrr}.
Moreover, in \eqref{anrr} - \eqref{anrrr} 
we can replace    
$(\Theta  (S \circ T)^*)^*
= 
\Theta  S \circ \Theta  T\circ \dots $
with either
$(\Theta  (S \circ T))^*
= 
\Theta  S \circ \Theta  T\circ \dots $
or
$\Theta  (S \circ T)^*
\subseteq  
\Theta  S \circ \Theta  T\circ \dots $
Similar remarks apply to Theorem \ref{npermm}.
We leave details to the interested reader.

(b) Congruence identities characterizing the conjunction of 
$n$-permutabilty  and  distributivity or
modularity appeared in  \cite[Propositions 4 and 5]{np}. 

(c) By equations \eqref{3aa} [\eqref{3dd} and \eqref{3ccb}]
in Corollary \ref{cor3} we have that 
if the assumption of congruence distributivity [congruence modularity]
is strengthened to $3$-distributivity [to the existence of two Gumm terms],
then 
we can allow $\alpha$ and $\Theta$ to be  reflexive and admissible relations
in the identities \eqref{anrr} - \eqref{anrupp} [\eqref{nram} -  \eqref{nruppm}],
provided we replace $=$ by $ \subseteq $.
   \end{remarks}

\section{Implication algebras and a $4$-ary  near-unanimity term}
 \labbel{ianu}

We  denote by $+$, $\cdot$ and $'$
the operations of a Boolean algebra.  
The variety $\mathcal I$ of \emph{implication algebras}
is the variety generated by polynomial reducts of Boolean algebras
in which $i(x,y) = xy'$ is the only basic operation.
Equivalently, $\mathcal I$ is the variety of algebras with a binary
operation $i$ which satisfies all the equations satisfied 
by the term $xy'$ in     Boolean algebras.
A more frequent description of implication algebras
uses the term $x'+y$, instead,  but Boolean duality 
implies that
(if we reverse the order of variables)
 we get the same variety. 
 Mitschke \cite{M} showed that $\mathcal I$ 
 is $3$-distributive, not $2$-distributive,
congruence $3$-permutable and not congruence permutable.

In the above notations,
$i(x,i(y,z)) $
represents the Boolean
term 
$f(x,y,z) = x(yz')' = x(y'+z)$.  
Sometimes it is
simpler to deal with the corresponding  reduct $\mathcal I^-$
of Boolean algebras. 
Namely, $\mathcal I^-$
is the variety generated by reducts of Boolean algebras 
having  $f$  as the only basic operation.
The varieties 
$\mathcal I$ and 
 $\mathcal I^-$ have many properties in common,
for example,
$\mathcal I^-$ is still
$3$-distributive, 
congruence $3$-permutable  and, obviously, 
 not $2$-distributive and not congruence permutable.
The terms
$j_1=f(x,f(x,y,z),z) =x(x'+yz'+z)= x(y+z)$ 
and 
$j_2 = f(z,y,x)= z(y'+x)$
are J{\'o}nsson terms witnessing
$3$-distributivity of $\mathcal I^-$.
The terms $f$
and $j_2$
are Hagemann-Mitschke  
 terms \cite{HM}
for congruence $3$-permutability of $\mathcal I^-$. 
 
On the other hand, $\mathcal I^-$ is much simpler to deal with.
For example,  free algebras in 
$\mathcal I^-$ 
are much smaller 
and more easily described
than 
free algebras in 
$\mathcal I$. 
Further details about 
$\mathcal I^-$ can be found 
in the former version \cite{ia} of this note.
 Now \cite{ia} is largely subsumed by 
the present version.   

By Theorem  \ref{npermd} and the above comments, 
both
 $\mathcal I$ and $\mathcal I^-$ satisfy the identities
\eqref{anrr} and \eqref{anrrr}
with four factors on the right 
and moreover we can take an admissible relation 
$R$ in place of $\Theta$ in these identities,
by Remark \ref{more}(c). 
A direct proof 
of related identities 
 appeared in  
 \cite{ia}.
By Theorem \ref{tol2}, we get that 
 $\mathcal I$ fails to satisfy the identity \eqref{id}.
Again, a direct proof appeared in \cite{ia}.
Some features of the counterexamples presented in 
\cite{ia}
are reported  below.

 \begin{example} \labbel{anuo} 
Let 
$\mathbf 2$ be the
 $2$-elements 
 Boolean algebra and
consider the following elements of  
$2^5$:
$   x = (1,1,1,0,0)$;
 $   y = (1,0,0,1,0)$ and
$   z = (0,1,0,1,1)$.
Let $ A$ be the subset of 
$2^5$ consisting of those elements which are $\leq$
than at least  one among   
$   x $,
 $   y $ or
$   z $.
 Then $ {\mathbf A}= ( A,i)$
is an implication algebra.
Let $ \alpha$ be the kernel of the second projection, 
$ \beta$ be the 
intersection of the kernels
of the first and of the fifth projections,
 $ \gamma$ be the intersection of the kernels
of the third and of the fourth projections.

Let $  \Psi$ be the binary relation on $ A$
defined as follows:
two elements $a,b \in  A$ are $  \Psi$-related if and only if 
at least one of the following conditions holds:

(a) both $a \leq  x$ and   $b \leq  x$, or

(b) (either $a \leq   y$ or   $a \leq  z$, possibly both) and
(either $b \leq  y$ or  $b \leq   z$, possibly both).

The relation $  \Psi$ is trivially symmetric; 
$ \Psi$ is also reflexive, since, by construction, 
 every element of $  A$  
is $\leq$ than either $x$, $y$ or $z$. We claim that $ \Psi$ 
is  admissible in $ {\mathbf A}$, thus a tolerance. Indeed,
if $a \mathrel { \Psi} c $ is witnessed by (a),
then $a b' \leq a \leq  x$ and $ cd' \leq c  \leq  x$,
for all $b,d \in  A$, hence 
$a b' \mathrel { \Psi} cd'$
(we do not even need the assumption that 
$ c $ and $ d $ are $  \Psi$-related).
Similarly, 
if $a \mathrel { \Psi} c$ is given by (b),
then $a b' $ is $ \leq $ than either 
$y$ or $z$  
and the same holds for $ cd' $,
hence $a b' \mathrel{  \Psi} cd' $.
We have proved that $\Psi$ is a tolerance
on $ {\mathbf A}$,

Let  $\Theta =\gamma \Psi $
be the intersection of $\gamma$ and $\Psi$ .  
 Then 
 $(   x,   z) \in  \alpha (  \beta \circ  \Theta )$, as witnessed 
by $  y$.
The only (other) element of $  A$ which is  $  \alpha   \beta $-related to 
$  x$ is $ x_1=(1,1,0,0,0) =    x (   y+  z)$ and the only
(other)  element  
$  \alpha   \beta $-related to 
$  z$ is $z_1 = (0,1,0,0,1) =    z (   y'+  x)$.
No non trivial $ \Theta$ relation
holds among the above elements,
besides 
$x \Theta x_1$ and  $z \Theta z_1$ and the converses,  hence 
$(   x,   z) \not\in  \alpha   \beta \circ 
  \alpha    \Theta \circ  \alpha  \beta $ and this shows that
the identity \eqref{id} fails in $ {\mathbf A}$.
See \cite{ia} for further details, comments and variations
related to the present example. 
\end{example}   

Mitschke \cite{Mn}  showed that
$\mathcal I$ 
has no near-unanimity term.
It is thus natural to ask whether there 
exists a congruence $3$-permutable,
$3$-distributive not $2$-distributive variety 
with a near-unanimity term. 
We show that some expansion of 
$\mathcal {I}$ has these properties.

Let 
$u$ be the lattice term defined by
$u(x_1, x_2, x_3, x_4) = \prod _{j \neq j } (x_i + x_j) $,
where the indices on the product vary on the set
$\{ 1,2,3,4\}$. 
The term $u$ is clearly a near-unanimity term 
in every lattice, in particular, in Boolean algebras.
Let $\mathcal I^{nu}$
denote  the variety generated by polynomial  
 reducts of Boolean algebras 
in which 
both  $i$ and $u$
are taken as basic operations.  
We denote by  $\mathcal I^{nu -}$
the variety in which only 
$f$ and  $u$ are considered. 

\begin{proposition} \labbel{unuo}
Both $\mathcal I^{nu}$ and 
 $\mathcal I^{nu -}$
are  $3$-distributive and congruence $3$-permutable
 varieties with a $4$-ary near-unanimity term. 
The varieties $\mathcal I^{nu}$ 
and  $\mathcal I^{nu -}$ are neither 
congruence permutable, nor
$2$-distributive. 
 \end{proposition}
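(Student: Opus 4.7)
The proposition splits cleanly into ``positive'' claims ($3$-distributivity, congruence $3$-permutability, and $u$ being a $4$-ary near-unanimity term) and ``negative'' ones (non-permutability, non-$2$-distributivity), which I would treat separately.

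For the positive part, the terms doing the work in the cited analysis of $\mathcal{I}$ and $\mathcal{I}^-$ are already $\{f\}$-terms (hence $\{i\}$-terms via $f(x,y,z)=i(x,i(y,z))$): the J\'onsson terms $j_1(x,y,z)=x(y+z)$ and $j_2(x,y,z)=z(y'+x)$ displayed just before the proposition, and the Hagemann--Mitschke pair consisting of $f$ together with $j_2$. These survive the expansion by $u$, giving $3$-distributivity and congruence $3$-permutability of both $\mathcal{I}^{nu}$ and $\mathcal{I}^{nu-}$. The $4$-ary near-unanimity property of $u$ is a direct Boolean computation: $u(y,x,x,x)=(y+x)^{3}x^{3}=x(x+y)=x$, using $x\le x+y$, with the other three identities following from the symmetry of $u(x_1,x_2,x_3,x_4)=\prod_{i<j}(x_i+x_j)$ under permutation of arguments.

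For the negative part I would lift the concrete counterexample $\mathbf{A}\subseteq\mathbf{2}^{5}$ of Example \ref{anuo} from $\mathcal{I}$ to $\mathcal{I}^{nu}$ (and similarly for $\mathcal{I}^{nu-}$) by verifying two things: (i) $\mathbf{A}$, being the downward closure of $\{x,y,z\}$ in $\mathbf{2}^{5}$, is already closed under $u$; and (ii) the tolerance $\Psi$ of Example \ref{anuo} is still preserved by $u$, hence is a tolerance of $\mathbf{A}$ viewed in $\mathcal{I}^{nu}$. For (i), a pigeonhole argument applied to any four elements of $\mathbf{A}$ yields a common dominator $t\in\{x,y,z\}$ for some pair $a_i,a_j$, whence $u(a_1,\dots,a_4)\le a_i+a_j\le t$ keeps the value inside $\mathbf{A}$. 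For (ii), a parallel case analysis on the indices where clauses (a) and (b) of $\Psi$ are witnessed for each pair $(a_i,b_i)$, together with the same pigeonhole, produces a common cover of the required type for $(u(\bar{a}),u(\bar{b}))$. With these two facts in hand, Example \ref{anuo} gives an $\mathcal{I}^{nu}$-algebra in which the tolerance identity \eqref{id} fails, so by Theorem \ref{tol2} no majority term can exist and $\mathcal{I}^{nu}$ is not $2$-distributive. For non-permutability I would invoke Remark \ref{ror}(b): in the same $\mathbf{A}$ one checks directly that $(x,z)\in\Psi\circ\Psi$ (via the intermediate element $0$) but $(x,z)\notin\Psi$, so $\Psi\circ\Psi\ne\Psi$ and the Werner--Hutchinson identity $R\circ R=R$ fails, ruling out congruence permutability.

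The main obstacle is the bookkeeping in step (ii), the preservation of $\Psi$ by $u$: one has to distribute the four argument pairs among those satisfying $\Psi$ via clause (a) (both $\le x$) and via clause (b) (both have covers in $\{y,z\}$), and then show that in every distribution the dominator structure of $u(\bar{a})$ and $u(\bar{b})$ still falls into clause (a) or clause (b) for $\Psi$. The crux is that whenever clause (a) would fail on the $u$-side (say fewer than two $a_i$'s are $\le x$), at least three indices must be of ``type (b)'', and pigeonhole on $\{y,z\}$ then forces a common cover in $\{y,z\}$ on each side, restoring clause (b). All remaining steps are either direct computation or appeals to Theorem \ref{tol2}, Remark \ref{ror}(b), and Example \ref{anuo}.
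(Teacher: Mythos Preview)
Your positive part is essentially the paper's: the J\'onsson and Hagemann--Mitschke terms are inherited from the $\{i\}$- (resp.\ $\{f\}$-) reduct, and the near-unanimity identity for $u$ is a Boolean one-liner.

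For the negative part your argument is correct but considerably heavier than the paper's. The paper does not touch Example \ref{anuo} at all. Instead it uses the $7$-element algebra $B=2^{3}\setminus\{(1,1,1)\}$ (the dual of Mitschke's original counterexample), checks in one line that $B$ is closed under $u$ by the same pigeonhole you use for $\mathbf A$, and then directly witnesses the failure of the \emph{congruence} identity $\alpha(\beta\circ\gamma)\subseteq\alpha\beta\circ\alpha\gamma$ with kernels of projections: the only interpolant would be $(1,1,1)\notin B$. No tolerance needs to be preserved, and Theorem \ref{tol2} is not invoked. Non-permutability is then obtained for free from the classical implication ``permutable $+$ distributive $\Rightarrow$ arithmetical $\Rightarrow$ $2$-distributive'', so a separate $R\circ R\neq R$ computation is unnecessary.

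What your route buys is a self-contained link back to the tolerance machinery of Sections \ref{tolimp}--\ref{3d}: you exhibit an $\mathcal I^{nu}$-algebra where \eqref{id} itself fails, which is a slightly sharper conclusion. The price is the case analysis in step (ii); your sketch of it is sound (the dichotomy ``$\ge 2$ pairs via clause (a)'' versus ``$\ge 3$ pairs via clause (b)'' is exhaustive, and in the second case pigeonhole on $\{y,z\}$ can be applied independently on the $a$-side and the $b$-side, since clause (b) does not require the two covers to coincide). One small citation point: the direction you need for non-permutability is Werner's theorem / Proposition \ref{nperm} with $n=2$ (congruence permutability of the variety $\Rightarrow$ $R\circ R=R$ throughout), whereas Remark \ref{ror}(b) is really about the failure of this equivalence for single algebras.
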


\begin{proof}
Since any congruence permutable and  distributive 
variety is arithmetical, hence $2$-distributive,
it is enough to show that 
$\mathcal I^{nu}$ is not $2$-distributive.
All the rest follows from previous remarks 
and   the mentioned results from \cite{M}.

 Essentially, we are  going to show that the counterexample
to $2$-distributivity presented in Mitschke \cite[p.\ 185]{M}---and
 credited in that form
to the referee---is closed under $u$. Notice that 
we are working with the dual. 
The subset $B = 2^3 \setminus \{ (1,1,1) \} $
of the $8$-elements Boolean algebra    
$\mathbf 2^3$ is clearly closed under $i$.
It is also closed under $u$,
since if   
$b_1, b_2, b_3, b_4 \in B$, then at least two 
$b_j$'s have a $0$ in the same position, hence
$u(b_1, b_2, b_3, b_4 )$ has $0$ in the same position.
If $\alpha$, $\beta$ and $\gamma$ are, respectively,
the kernels of the $2$nd, $1$st and  $3$rd projections, 
then $((1,1,0), (0,1,1)) \in \alpha ( \beta \circ \gamma )$, as witnessed
by the element $(1,0, 1)$. But 
 $((1,1,0), (0,1,1)) \not \in \alpha  \beta \circ \alpha  \gamma $ in $B$,
since the only element which could do the job is
$(1,1,1)$ and 
$(1,1,1) \not \in B$.
\end{proof}

\begin{remark} \labbel{contol}
In \cite{contol}
we showed that, under a fairly general hypothesis,
a variety $\mathcal {V}$ satisfies an identity for congruences if and only if  
$\mathcal {V}$ satisfies the same identity for tolerances,
provided that only tolerances
representable as $R \circ R^\smallsmile $ are taken into account.
By \cite{M}, $\mathcal {I}$ is $3$-distributive 
that is,    $\mathcal {I}$ satisfies the congruence identity
$\alpha( \beta \circ \gamma ) \subseteq \alpha \beta \circ 
\alpha \gamma \circ \alpha \beta  $. On the other hand, 
$\mathcal {I}$ fails to satisfy this identity 
when  $\gamma$ is interpreted as a
tolerance, by Example \ref{anuo}. Alternatively, use Theorem \ref{tol2} and 
 again \cite{M}, where implication algebras are shown not 
to be $2$-distributive.
Hence the assumption of representability is necessary 
in \cite{contol}, even in the case of $3$-distributive congruence $3$-permutable varieties.   
A similar 
counterexample 
for $4$-distributive varieties has been presented   in \cite{baker}.

Notice that the results from 
\cite{contol} imply that every $3$-distributive 
variety satisfies, for example,
 $\alpha( \beta \circ R \circ R ^\smallsmile  ) 
\subseteq \alpha \beta \circ 
\alpha ( R \circ R ^\smallsmile) \circ \alpha \beta  $,
a  result  formally incomparable with the identity
\eqref{3b}.
 \end{remark}

{\scriptsize This is a preliminary version, it might contain inaccuraccies (to be
precise, it is more likely to contain inaccuracies than planned subsequent versions).

The author considers that it is highly  inappropriate, 
 and strongly discourages, the use of indicators extracted from the list below
  (even in aggregate forms in combination with similar lists)
  in decisions about individuals (job opportunities, career progressions etc.), 
 attributions of funds  and selections or evaluations of research projects. \par }


\begin{thebibliography}{10}


\bibitem{CHL} 
G.\ Cz\'edli, E.\ Horv\'ath, P.\ Lipparini, \emph{Optimal Mal'tsev conditions
for congruence modular varieties},
Algebra Universalis \textbf{53}, 267--279 (2005).



\bibitem
{D} 
 A.\ Day, \emph{A characterization of modularity for congruence lattices of algebras},
 Canad. Math.
Bull. \textbf{12}, 167--173  (1969).

\bibitem{DF}
 A.\ Day,  R.\ Freese, \emph{A characterization of identities implying congruence
 modularity, I}, Canad. J. Math. \textbf{32}, 1140--1167 (1980). 

\bibitem{FJ}  R.\ Freese, B.\ J{\'o}nsson,
\emph{Congruence modularity implies the Arguesian identity}, Algebra
Universalis \textbf{6}, 225--228 (1976).


\bibitem{FMK} 
R.\ Freese, R.\ McKenzie, \emph{Commutator theory for congruence modular
  varieties}, London Mathematical Society Lecture Note Series, vol.\ 125,
  Cambridge University Press, Cambridge, 1987. Second edition
available online at http://math.hawaii.edu/\textasciitilde ralph/Commutator/.

\bibitem{gm}
G.\ Gyenizse, M.\ Mar\'oti,
 \emph{Quasiorder lattices of varieties},
Algebra Universalis, doi:10.1007/s00012-018-0512-1 (2018). 


\bibitem{G} H.-P.\ Gumm, \emph{Geometrical methods in congruence
   modular algebras}, Mem.\ Amer.\ Math.\ Soc.\ \textbf{45}, 1983.

\bibitem{HM}
 J.\ Hagemann, A.\ Mitschke, \emph{On n-permutable congruences}, Algebra Universalis \textbf{3}, 8--12  (1973).

\bibitem{HMK} 
D.\ Hobby, R.\ McKenzie, \emph{The structure of finite algebras},
  Contemporary Mathematics, vol.~76, American Mathematical Society, Providence,
  RI, 1988.\ 


\bibitem{H} 
G.\ Hutchinson, \emph{Relation categories and 
coproduct congruence categories in universal algebra},
  Algebra Universalis  \textbf{32},  609--647 (1994).

\bibitem
{J} B.\ J{\'o}nsson, \emph{Algebras whose congruence lattices
   are distributive}, Math. Scand. \textbf{21}, 110--121  (1967).


\bibitem{cd} B.\ J{\'o}nsson, \emph{Congruence varieties},
   Algebra Universalis \textbf{10}, 355--394 (1980).


\bibitem
{adjt} A. Kazda, M.  Kozik, R.  McKenzie, M. Moore,
 \emph{Absorption and directed J\'onsson terms},
in: J.\ Czelakowski  (ed.) \emph{Don Pigozzi on Abstract Algebraic Logic, Universal Algebra, and Computer Science}, Outstanding Contributions to Logic \textbf{16}, Springer, Cham, 203--220  (2018).

\bibitem{kk}  K.\ A.\ Kearnes, E.\ W.\  Kiss, \emph{The shape of
   congruence lattices}, Mem.\ Amer.\ Math.\ Soc.\ \textbf{222}, 2013.

\bibitem{KV} E.\ Kiss, M.\ Valeriote,
\emph{On tractability and
   congruence distributivity}, Log. Methods Comput. Sci. \textbf{3},
   2:6, 20  (2007).

\bibitem{np} 
P.\ Lipparini, \emph{$n$-permutable varieties satisfy non trivial congruence identities},
   Algebra Universalis \textbf{33},  159--168 (1995).

\bibitem{contol} 
P.\ Lipparini, \emph{From congruence identities to tolerance identities}, Acta Sci. Math. (Szeged) \textbf{73},  31--51 (2007). 

\bibitem
{jds} P.\ Lipparini, \emph{The  J{\'o}nsson  distributivity spectrum}, 
Algebra Universalis \textbf{79} no. 2, Art. 23, 16  (2018).


\bibitem{ricmc}
P.\ Lipparini, \emph{A variety $\mathcal {V}$  is congruence modular if and only
if $\mathcal {V}$  satisfies 
$\Theta(R \circ R) \subseteq (\Theta R)^h $, for some $h$},
in K. Adaricheva,
 W. DeMeo,
J. Hyndman (eds.)
\emph{Algebras and Lattice in Hawai'i
a conference in honor of
Ralph Freese, William Lampe, and J.B. Nation},
61--65,
(2018),
https://universalalgebra.github.io/ALH-2018/assets/ALH-2018-proceedings-6x9.pdf

\bibitem{uar} 
 P.\ Lipparini, \emph{Unions of admissible relations
and congruence distributivity},  Acta Math. Univ. Comenian.
 \textbf{87 (2)},
   251--266 (2018).

\bibitem{baker} P.\ Lipparini, 
\emph{The distributivity spectrum of Baker's variety}, 
arXiv:1709.05721, 1--22 (2017/2018).

\bibitem{ntcm}
P.\ Lipparini, 
\emph{A note on the number of terms witnessing congruence modularity},
 arXiv:1709.06023, 1--20 (2017). 

\bibitem{ia} P.\ Lipparini,  \emph{Relation identities in implication algebras}, 
arXiv:1805.02458v1, 1--10 (2018).




\bibitem{M} 
A.\ Mitschke,  
\emph{Implication  Algebras are 3-Permutable  and 3-Distributive},
Algebra Universalis \textbf{1}, 182--186  (1971).

\bibitem{Mn} 
A.\ Mitschke,   
\emph{Near unanimity identities and congruence distributivity in equational classes}, Algebra Universalis \textbf{8},  29--32  (1978). 

\bibitem{N}
J.\  B.\  Nation,
\emph{Varieties  whose  congruences  satisfy  certain  lattice  identities},  Algebra  Universalis  \textbf{4},  78--88  (1974).

\bibitem
{T}  S.\ T.\ Tschantz,
 \emph{More conditions equivalent to congruence modularity},
in  {Universal Algebra
   and Lattice Theory}, 270--282,
   Lecture Notes in Math. \textbf{1149}  (1985).

\bibitem{W} 
H.\ Werner, \emph{A Mal'cev condition for admissible relations},
 Algebra Universalis \textbf{3}, 263 (1973).

\bibitem{Z} 
L.\ Z\'adori, \emph{Monotone J´onsson operations and near unanimity functions},
 Algebra
Universalis \textbf{33}, 216--236 (1995).

\end{thebibliography}
\end{document}